\date{\today}
\newtheorem{theorem}{Theorem}[section]
\newtheorem{lemma}[theorem]{Lemma}
\newtheorem{corollary}[theorem]{Corollary}
\newtheorem{proposition}[theorem]{Proposition}
\newtheorem{question}[theorem]{Question}
\theoremstyle{definition}
 \newtheorem{definition}[theorem]{Definition}
 \newtheorem{remark}[theorem]{Remark}
\newcommand{\IR}{\mathbb R}
\newcommand{\IC}{\mathbb C}
\newcommand{\w}{\omega}
\newcommand{\IP}{\mathbb P}
\newcommand{\C}{\mathcal{C}}
\newcommand{\HH}{\mathcal{H}}
\newcommand{\A}{\mathcal{A}}
\newcommand{\cov}{\operatorname{cov}}
\newcommand{\cof}{\operatorname{cof}}
\newcommand{\cc}{\mathfrak c}
\newcommand{\vid}{\hat{\ }}
\newcommand{\bigvid}{\hat{\ \ }}
\newcommand{\uhr}{\upharpoonright}
\newcommand{\cf}{\mathrm{cf}}
\newcommand{\name}[1]{\dot{#1}}
\newcommand{\la}{\langle}
\newcommand{\ra}{\rangle}
\newcommand{\forces}{\Vdash}
\newcommand{\nothing}[1]{}
\newcommand{\zero}{\normalfont{\textbf{0}}}
\newcommand{\one}{\normalfont{\textbf{1}}}
\title[Convergence of measures in forcing extensions]{Convergence of measures in forcing extensions}
\author{Damian Sobota  and Lyubomyr Zdomskyy}
\address{Institut f\"ur Diskrete Mathematik und Geometrie, Technische Universit\"at Wien, Wiedner Hauptstra\ss e 8-10/104, 1040 Wien, Austria.}
\email{ein.damian.sobota@gmail.com}
\email{lzdomsky@gmail.com}
\subjclass[2010]{Primary: 03E75, 28A33, 28E15. Secondary: 03E17, 46E27.}
\keywords{Convergence of measures, Nikodym property, Grothendieck property, Vitali--Hahn--Saks property, forcing extensions, Laver property, Efimov spaces}
\thanks{ The  authors would
like to thank  the Austrian Science Fund FWF (Grant I 2374-N35)
 for generous support for this research.}
\begin{document}
\maketitle

\begin{abstract}
We prove that if $\A$ is a $\sigma$-complete Boolean algebra in a model $V$ of set theory and $\IP\in V$ is a proper forcing with the Laver property preserving the ground model reals non-meager, then every pointwise convergent sequence of measures on $\A$ in a $\IP$-generic extension $V[G]$ is weakly convergent, i.e. $\A$ has the Vitali--Hahn--Saks property in $V[G]$. This yields a consistent example of a whole class of infinite Boolean algebras with this property and of cardinality strictly smaller than the dominating number $\mathfrak{d}$. We also obtain a new consistent situation in which there exists an Efimov space.  
\end{abstract}

\section{Introduction}

It is a standard topic in Banach space theory to investigate whether the convergence of functionals on a given Banach space in one topology implies the convergence in another finer one. In this paper we study the following instance of this problem. Let $\A$ be a $\sigma$-complete Boolean algebra. It follows from Nikodym's Uniform Boundedness Principle (see Diestel \cite[page 80]{Die84}) that every pointwise convergent sequence of measures on $\A$ is also weak* convergent (i.e. $\A$ has \textit{the Nikodym property}; see Section \ref{section:convergence} for definitions). On the other hand, Grothendieck \cite{Gro53} proved that every weak* convergent sequence of measures on $\A$ is weakly convergent (i.e. $\A$ has \textit{the Grothendieck property}). Thus, it follows that if $K_\A$ is the Stone space of a $\sigma$-complete Boolean algebra $\A$, then the pointwise convergence of a sequence in the dual space $C\big(K_\A\big)^*$ implies its weak convergence (i.e. $\A$ has the \textit{Vitali--Hahn--Saks property}).

Intuitively speaking, the Stone spaces of Boolean algebras with the Nikodym property do not admit any non-trivial pointwise convergent sequences of measures --- here, ``non-trivial'' means ``not weak* convergent''. Similarly, the Stone spaces of Boolean algebras with the Grothendieck property do not have any non-trivial weak* convergent sequences of measures, where ``non-trivial'' this time means ``not weakly convergent''. It is well-known that the Stone spaces of Boolean algebras with any of the Nikodym or Grothendieck properties do not have any non-trivial, i.e. non-eventually constant, convergent sequences, so both properties may be seen as strengthening of the lack of non-trivial convergent sequences in the Stone spaces. As we see further, this is related to the famous Efimov problem. 

Let now $V$ denote the set-theoretic universe, $\IP\in V$ be a notion of forcing and $G$ a $\IP$-generic filter over $V$. Assume that $\A\in V$ is a $\sigma$-complete Boolean algebra.  Preservation of the Vitali--Hahn--Saks property in the extension $V[G]$ is not automatic --- e.g. if $\IP$ adds new reals, then the ground model algebra $\big(\wp(\w)\big)^V$ of all subsets of integers will no longer be $\sigma$-complete in $V[G]$ and it may also fail to have the Vitali--Hahn--Saks property --- such a situation happens e.g. after adding Cohen reals (see Dow and Fremlin \cite[Introduction]{DF07}). The main aim of this paper is thus to find out what properties of $\IP$ are sufficient to ensure that $\A$ will have the Vitali--Hahn--Saks property in the extension $V[G]$.

Our question was motivated by the utility of the properties --- the research of Seever \cite{See68}, Talagrand \cite{Tal80}, Haydon \cite{Hay81}, Molt\'o \cite{Mol81}, Schachermayer \cite{Sch82}, Freniche \cite{Fre84}, Aizpuru \cite{Aiz92}, Valdivia \cite{Val13}, K\k{a}kol and Lop\'ez-Pellicer \cite{KakLopPel16} etc. showed their importance. Also the following cardinal number issue led us to deal with the problem. It can be shown that many separation or interpolation properties of infinite Boolean algebras studied by the aforementioned authors implying the Nikodym or Grothendieck properties imply also that these algebras have cardinality at least equal to the continuum $\cc$. The natural question whether consistently there are infinite Boolean algebras with at least one of the properties and of cardinality strictly less than $\cc$ appeared. Brech \cite{Bre06} showed that in the side-by-side Sacks extension all ground model $\sigma$-complete Boolean algebras preserve the Grothendieck property. Recently, Sobota and Zdomskyy \cite{SobZdo17} proved the same result for the Nikodym property, which --- together with Brech's result --- consistently yields a class of examples of Boolean algebras with the Vitali--Hahn--Saks property and of cardinality $\omega_1$ while the inequality $\omega_1<\cc$ holds true in the model. Also, Sobota \cite{Sob17} proved in ZFC that for every cardinal number $\kappa$ such that $\cof([\kappa]^\w)=\kappa\ge\cof(\mathcal{N})$ (the cofinality of the Lebesgue null ideal) there exists a Boolean algebra with the Nikodym property and of cardinality $\kappa$.

In this paper we generalize the results of Brech \cite{Bre06} and the authors \cite{SobZdo17}. Namely, we prove (Theorem \ref{theorem:main}) that if $\IP$ is a proper forcing preserving the ground model reals as a non-meager subset of the reals in the extension and having the Laver property (Definitions \ref{def:preservation_nonmeager} and \ref{def:laver_property}), then in any $\IP$-generic extension any ground model $\sigma$-complete Boolean algebra has the Vitali--Hahn--Saks property. There are many examples of notions of forcing satisfying the assumptions of the theorem, e.g. side-by-side products of Sacks forcing, Miller forcing or Silver(-like) forcing.


Our result has some interesting consequences. First, it yields a consistent example of a whole class of infinite Boolean algebras with the Vitali--Hahn--Saks property and of cardinality strictly less than the dominating number $\mathfrak{d}$, as well as it sheds some new light on connections between convergence of measures on Boolean algebras and cardinal characteristics of the continuum --- see Section \ref{section:cardinal}. Second, as shown in Section \ref{section:efimov}, it can be used to obtain a new consistent situation in which there exists \textit{an Efimov space} --- a counterexample to \textit{the Efimov problem}, a long-standing open question asking whether there exists an infinite compact Hausdorff space containing neither any non-trivial convergent sequence nor any copy of $\beta\omega$, the \v{C}ech-Stone compactification of integers. So far, Efimov spaces have been obtained only with an aid of additional set-theoretic assumptions (like $\Diamond$) --- no ZFC Efimov space is known; see Hart \cite{Har07} for a detailed discussion.

The structure of the paper is as follows. In Section \ref{section:convergence} we recall basic definitions, properties and facts concerning (sequences of) measures on Boolean algebras. In Section \ref{section:trees_on_forcings} we construct special auxiliary trees on a poset $\IP$ associated with the Nikodym and Grothendieck properties. Section \ref{section:aux_set_theory} contains auxiliary results concerning almost disjoint families and proper posets. In Section \ref{section:main} we present the proof of the main result --- Theorem \ref{theorem:main}. Section \ref{section:consequences} presents the consequences of the theorem to cardinal characteristics of the continuum and Efimov spaces.


\section{Measures on Boolean algebras\label{section:convergence}}

In this section we provide notation, terminology, basic definitions and facts concerning sequences of measures used in the paper.

Let $\A$ be a Boolean algebra. The Stone space of $\A$ is denoted by $K_\A$. Recall that by the Stone duality $\A$ is isomorphic to the algebra of clopen subsets of $K_\A$; if $A\in\A$, then $[A]$ denotes the corresponding clopen subset of $K_\A$. A subset $X$ of $\A$ is an \textit{antichain} if $x\wedge y=\zero_\A$ for every distinct $x,y\in X$, i.e. every two distinct elements of $X$ are \textit{disjoint}.

A \textit{measure} $\mu\colon\A\to\IC$ on $\A$ is always a finitely additive complex-valued function with finite variation $\|\mu\|$. The measure $\mu$ has a unique regular Borel extension (denoted also by $\mu$) onto the space $K_\A$, preserving the variation of $\mu$. By the Riesz representation theorem the dual space $C\big(K_\A\big)^*$ of the Banach space $C\big(K_\A\big)$ of continuous complex-valued functions on $K_\A$, endowed with the supremum norm, is isometrically isomorphic to the space of all regular Borel measures on $K_\A$, hence $C\big(K_\A\big)^*$ is isometrically isomorphic to the space of all measures on $\A$. 

Let us now recall basic definitions concerning sequences of measures. 

\begin{definition}
Let $\A$ be a Boolean algebra. We say that a sequence $\la\mu_n\colon\ n\in\w\ra$ of measures on $\A$ is:
\begin{itemize}
	\item \textit{pointwise bounded} if $\sup_{n\in\w}\big|\mu_n(A)\big|<\infty$ for every $A\in\A$;
	\item \textit{uniformly bounded} if $\sup_{n\in\w}\big\|\mu_n\big\|<\infty$;
	\item \textit{pointwise convergent} if $\lim_{n\to\infty}\mu_n(A)$ exists for every $A\in\A$;
	\item \textit{weak* convergent} if $\lim_{n\to\infty}\int_{K_\A}fd\mu_n$ exists for every $f\in C\big(K_\A\big)$;
	\item \textit{weakly convergent} if $\lim_{n\to\infty}x^{**}\big(\mu_n\big)$ exists for every $x^{**}\in C\big(K_\A\big)^{**}$.
\end{itemize}
\end{definition}

\begin{remark}\label{remark:weak_conver_borel}
Note that the weak convergence is equivalent to the convergence on every Borel subset of $K_\A$ --- see Diestel \cite[Theorem 11, page 90]{Die84}.
\end{remark}

\begin{definition}
We say that a Boolean algebra $\A$ has:
\begin{itemize}
	\item \textit{the Nikodym property} if every pointwise bounded sequence of measures on $\A$ is uniformly bounded;
	\item \textit{the Grothendieck property} if every weak* convergent sequence of measures on $\A$ is weakly convergent;
	\item \textit{the Vitali--Hahn--Saks property} if every pointwise convergent sequence of measures on $\A$ is weakly convergent.
\end{itemize}
\end{definition}

Proposition \ref{prop:nikodym_convergence}, which is a simple folklore fact, shows that the definition of the Nikodym property can be also stated in the convergence manner: a Boolean algebra $\A$ has the Nikodym property if every pointwise convergent sequence of measures on $\A$ is weak* convergent. However, the first definition is easier to deal with (and also follows from the original statement of Nikodym's theorem \cite{Nik33}). The Vitali--Hahn--Saks property is usually stated in terms of so-called exhaustiveness of families of measures, but Schachermayer \cite[Theorem 2.5]{Sch82} proved that the property is equivalent to the conjunction of the Nikodym and Grothendieck properties, whence follows our definition. Note that by Remark \ref{remark:weak_conver_borel} the definition of the property can be stated also as follows: a Boolean algebra $\A$ has the Vitali--Hahn--Saks property if every pointwise convergent sequence of measures on $\A$ is convergent on every Borel subset of $K_\A$.

\begin{proposition}\label{prop:nikodym_convergence}
Let $\A$ be a Boolean algebra. Then, the following are equivalent:
\begin{enumerate}
	\item every pointwise convergent sequence of measures on $\A$ is weak* convergent;
	\item every pointwise convergent sequence of measures on $\A$ is uniformly bounded;
	\item every pointwise bounded sequence of measures on $\A$ is uniformly bounded.
\end{enumerate}
\end{proposition}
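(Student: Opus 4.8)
The plan is to run the cycle $(1)\Rightarrow(2)\Rightarrow(3)\Rightarrow(1)$; recall that $(3)$ is precisely the Nikodym property, so the content of the proposition is that $(1)$ and $(2)$ are harmless reformulations of it.

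First I would prove $(1)\Rightarrow(2)$ via the uniform boundedness principle. Let $\langle\mu_n\colon n\in\w\rangle$ be pointwise convergent on $\A$. By $(1)$ it is weak$^*$ convergent, so for each $f\in C(K_\A)$ the scalar sequence $\big(\int_{K_\A}f\,d\mu_n\big)_{n\in\w}$ converges, hence is bounded; that is, the family $\{\mu_n\colon n\in\w\}\subseteq C(K_\A)^*$ is pointwise bounded on the Banach space $C(K_\A)$. Banach--Steinhaus then gives $\sup_n\|\mu_n\|_{C(K_\A)^*}<\infty$, and since $\|\mu_n\|_{C(K_\A)^*}$ equals the variation $\|\mu_n\|$ by the Riesz representation theorem, the sequence is uniformly bounded.

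The main (and only mildly non-obvious) step is $(2)\Rightarrow(3)$, which I would handle by a rescaling argument in contrapositive form. Suppose $\langle\mu_n\colon n\in\w\rangle$ is pointwise bounded but $\sup_n\|\mu_n\|=\infty$. Since $\{n\in\w\colon\|\mu_n\|>B\}$ is infinite for every $B$, I can choose $n_1<n_2<\cdots$ with $\|\mu_{n_k}\|\ge k^2$ and set $\nu_k:=\tfrac1k\mu_{n_k}$. For each fixed $A\in\A$ we have $|\nu_k(A)|\le\tfrac1k\sup_{n}|\mu_n(A)|\to0$, so $\langle\nu_k\colon k\in\w\rangle$ converges pointwise to the zero measure; on the other hand $\|\nu_k\|=\tfrac1k\|\mu_{n_k}\|\ge k\to\infty$, so this sequence is not uniformly bounded, contradicting $(2)$. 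The whole trick is that dividing by $k$ annihilates each individual value $\mu_{n_k}(A)$ --- bounded in $k$ by pointwise boundedness --- but need not tame the variation $\|\mu_{n_k}\|$, a supremum of finite sums over antichains, which we have arranged to grow faster than $k$.

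Finally, for $(3)\Rightarrow(1)$ I would combine $(3)$ with the density of step functions in $C(K_\A)$. If $\langle\mu_n\colon n\in\w\rangle$ is pointwise convergent it is pointwise bounded, hence uniformly bounded by $(3)$, say $\|\mu_n\|\le M$ for all $n$. Since $\int_{K_\A}\chi_{[A]}\,d\mu_n=\mu_n(A)$ converges for every $A\in\A$, linearity gives convergence of $\int_{K_\A}g\,d\mu_n$ for every finite linear combination $g$ of indicator functions of clopen subsets of $K_\A$; because $K_\A$ is compact and zero-dimensional, such step functions are uniformly dense in $C(K_\A)$. For arbitrary $f\in C(K_\A)$ and $\e>0$, choosing a step function $g$ with $\|f-g\|<\e$ yields $\big|\int_{K_\A}f\,d\mu_n-\int_{K_\A}f\,d\mu_m\big|\le 2M\e+\big|\int_{K_\A}g\,d\mu_n-\int_{K_\A}g\,d\mu_m\big|$ for all $n,m$, and the final term tends to $0$; hence $\big(\int_{K_\A}f\,d\mu_n\big)_{n\in\w}$ is Cauchy and $\langle\mu_n\rangle$ is weak$^*$ convergent. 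Apart from the rescaling idea in $(2)\Rightarrow(3)$, no step presents a genuine obstacle --- the only thing one must recall is that the clopen sets form a basis of the Stone space, which is exactly why step functions are dense and why this proposition is folklore.
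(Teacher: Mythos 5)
Your proof is correct and follows essentially the same route as the paper: $(1)\Rightarrow(2)$ by Banach--Steinhaus, $(2)\Rightarrow(3)$ by rescaling a pointwise bounded but norm-unbounded sequence so that it converges pointwise to zero while its variations still blow up (the paper divides by $\sqrt{\|\mu_n\|}$ where you divide by $k$ along a subsequence with $\|\mu_{n_k}\|\ge k^2$ --- the same idea, and your version is if anything slightly more careful about passing to a subsequence), and $(3)\Rightarrow(1)$ by uniform boundedness plus density of step functions in $C(K_\A)$. The only cosmetic difference is that in the last implication you run a Cauchy estimate for a general pointwise convergent sequence, whereas the paper reduces to sequences converging pointwise to $0$; both are fine.
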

\begin{proof}
\noindent(1)$\Rightarrow$(2): By the Banach--Steinhaus theorem (the Uniform Boundedness Principle) every weak* convergent sequence of measures on $\A$ is uniformly bounded.

\medskip

\noindent(2)$\Rightarrow$(3): Assume there is a pointwise bounded sequence $\la\mu_n\colon\ n\in\w\ra$ of measures on $\A$ such that $\lim_{n\to\infty}\big\|\mu_n\big\|=\infty$. For every $n\in\w$ define the measure:
\[\nu_n=\mu_n/\sqrt{\big\|\mu_n\big\|}.\]
Then, $\la\nu_n\colon\ n\in\w\ra$ is pointwise convergent to $0$. Indeed, for every $A\in\A$ and $n\in\w$ we have:
\[\big|\nu_n(A)\big|=\frac{\big|\mu_n(A)\big|}{\sqrt{\big\|\mu_n\big\|}}\le\frac{\sup_{m\in\w}\big|\mu_m(A)\big|}{\sqrt{\big\|\mu_n\big\|}},\]
so $\lim_{n\to\infty}\nu_n(A)=0$ for every $A\in\A$. On the other hand, since $\big\|\nu_n\big\|=\sqrt{\big\|\mu_n\big\|}$ for every $n\in\w$, we have $\lim_{n\to\infty}\big\|\nu_n\big\|=\infty$, a contradiction with (2).

\medskip 

\noindent(3)$\Rightarrow$(1): Let $\la\mu_n\colon\ n\in\w\ra$ be a sequence of measures on $\A$ pointwise convergent to $0$. Fix $f\in C\big(K_\A\big)$ and let $\varepsilon>0$. Since $\la\mu_n\colon\ n\in\w\ra$ is pointwise bounded, it is by (3) uniformly bounded. Let then $M>0$ be such that $\big\|\mu_n\big\|<M$ for every $n\in\w$. Let $\sum_{i=1}^k\alpha_i\chi_{A_i}\in C\big(K_\A\big)$ ($\alpha_i\in\IC,A_i\in\A$) be such a function that:
\[\big\|f-\sum_{i=1}^k\alpha_i\chi_{A_i}\big\|<\varepsilon/(2M).\]
By the pointwise convergence to $0$, there is $N\in\w$ such that for every $n>N$ we have:
\[\sum_{i=1}^k\big|\alpha_i\big|\cdot\big|\mu_n\big(A_i\big)\big|<\varepsilon/2.\]
Then, for every $n>N$ it holds:
\[\Big|\int_{K_\A}fd\mu_n\Big|\le\int_{K_\A}\big|f-\sum_{i=1}^k\alpha_i\chi_{A_i}\big|d\mu_n+\int_{K_\A}\Big|\sum_{i=1}^k\alpha_i\chi_{A_i}\Big|d\mu_n\le\]
\[\big\|f-\sum_{i=1}^k\alpha_i\chi_{A_i}\big\|\cdot\big\|\mu_n\big\|+\sum_{i=1}^k\big|\alpha_i\big|\cdot\big|\mu_n\big(A_i\big)\big|<\big(\varepsilon/(2M)\big)\cdot M+\varepsilon/2=\varepsilon,\]
which proves (3).
\end{proof}


\subsection{Anti-Nikodym sequences}

\begin{definition}
A sequence $\la\mu_n\colon\ n\in\w\ra$ of measures on a Boolean algebra $\A$ is \textit{anti-Nikodym} if it is pointwise bounded on $\A$ but not uniformly bounded.
\end{definition}

\noindent Obviously, a Boolean algebra has the Nikodym property if it does not have any anti-Nikodym sequences of measures. The following lemma is an important tool in studying the Nikodym property --- see Sobota \cite[Lemmas 4.4 and 4.7]{Sob17} for a proof.

\begin{lemma}\label{lemma:aN_antichain}
Let $\A$ be a Boolean algebra and $\la\mu_n\colon\ n\in\w\ra$ an anti-Nikodym sequence of measures on $\A$. Then, there exists $x\in K_\A$ such that 
for every finite $\A_0\subset \A$ with $x\not\in\bigvee \A_0$ and $M>0$, there exist $X\in[\w]^\w$ and an antichain $\big\{A_n\colon\ n\in X\big\}$ in $\one_\A\setminus\bigvee\A_0$ such that for every $n\in X$ we have $x\not\in A_n$ and the following inequality holds:
\[\big|\mu_n(A_n)\big|>\max_{\C\subset\A_0}\big|\mu_n\big(\bigvee\C\big)\big|+M.\]\hfill$\Box$
\end{lemma}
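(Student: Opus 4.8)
The plan is to reduce the claim to producing a single well-chosen point $x\in K_\A$, to isolate such a point by a compactness argument, and to obtain the antichains by a routine recursion. Write $[A]$ for the clopen subset of $K_\A$ corresponding to $A\in\A$, identify points of $K_\A$ with ultrafilters on $\A$, and keep the notation $\mu_n$ for the regular Borel extension. Fix a finite $\A_0\subset\A$ for the moment; since there are only finitely many elements of the form $\bigvee\C$ with $\C\subset\A_0$ and each sequence $\la\mu_n(\bigvee\C):n\in\w\ra$ is bounded (the $\mu_n$ being pointwise bounded), the quantity $R_{\A_0}:=\sup_n\max_{\C\subset\A_0}|\mu_n(\bigvee\C)|$ is finite. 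Hence it suffices to find a point $x\in K_\A$ satisfying the property $(\star)$: $\sup_n|\mu_n|([C]\setminus\{x\})=\infty$ for every $C\in\A$ with $x\in[C]$; for once such an $x$ is in hand, I claim that for each such $C$ and each $L>0$ one can find $X\in[\w]^\w$ and pairwise disjoint nonzero elements $A_n\le C$ ($n\in X$) with $x\notin[A_n]$ and $|\mu_n(A_n)|>L$, and applying this to $C=\one_\A\setminus\bigvee\A_0$ and $L=R_{\A_0}+M$ then gives the lemma, because $R_{\A_0}+M\ge\max_{\C\subset\A_0}|\mu_n(\bigvee\C)|+M$ for all $n$.

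To find $x$ satisfying $(\star)$, suppose there were none. Then each $y\in K_\A$ would have a clopen neighbourhood $[C_y]$ with $R_y:=\sup_n|\mu_n|([C_y]\setminus\{y\})<\infty$, and by compactness finitely many $[C_{y_1}],\dots,[C_{y_m}]$ would cover $K_\A$. For every $i$ and $n$, using $|\mu_n|(\{y_i\})=|\mu_n(\{y_i\})|$ and $\mu_n(\{y_i\})=\mu_n([C_{y_i}])-\mu_n([C_{y_i}]\setminus\{y_i\})$ we obtain $|\mu_n(\{y_i\})|\le\sup_k|\mu_k([C_{y_i}])|+R_{y_i}<\infty$, the supremum being finite by pointwise boundedness. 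Therefore $\|\mu_n\|=|\mu_n|(K_\A)\le\sum_{i=1}^m\big(|\mu_n(\{y_i\})|+|\mu_n|([C_{y_i}]\setminus\{y_i\})\big)$ would be bounded in $n$, contradicting that $\la\mu_n:n\in\w\ra$ is not uniformly bounded.

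Now fix $x$ with $(\star)$, a $C\in\A$ with $x\in[C]$, and $L>0$. I will use the elementary fact that for a complex regular Borel measure $\nu$ on $K_\A$ and an open set $O$ one has $|\nu|(O)\le 4\sup\{|\nu(A)|:A\in\A,\ [A]\subset O\}$: pick a clopen $[A_*]\subset O$ with $|\nu|([A_*])$ within $\e$ of $|\nu|(O)$ (possible by inner regularity and zero-dimensionality), apply the Hahn decompositions of the real and imaginary parts of the restriction of $\nu$ to $[A_*]$ to get a Borel $E\subset[A_*]$ with $|\nu(E)|\ge|\nu|([A_*])/4$, and approximate $E$ in $|\nu|$-measure by a clopen subset $A$ of $[A_*]$ (the clopen sets generating the Borel $\sigma$-algebra), so that $|\nu(A)|$ is within $\e'$ of $|\nu(E)|$. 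Now build $n_0<n_1<\cdots$ and pairwise disjoint $A_{n_0},A_{n_1},\dots\le C$ recursively: given $A_{n_0},\dots,A_{n_{k-1}}$, put $D_k=C\setminus\bigvee_{j<k}A_{n_j}$, so $x\in[D_k]$; then by $(\star)$ the open set $O_k=[D_k]\setminus\{x\}$ satisfies $\sup_n|\mu_n|(O_k)=\infty$, so we may pick $n_k>n_{k-1}$ with $|\mu_{n_k}|(O_k)>4L$, and by the fact above there is $A_{n_k}\in\A$ with $[A_{n_k}]\subset O_k$ and $|\mu_{n_k}(A_{n_k})|>L$. This $A_{n_k}$ satisfies $A_{n_k}\le D_k\le C$, $A_{n_k}\ne\zero_\A$, $x\notin[A_{n_k}]$, and is disjoint from the earlier ones (as $A_{n_k}\le D_k$); with $X=\{n_k:k\in\w\}$ this establishes the claim of the first paragraph and finishes the proof.

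I expect the choice of $x$ to be the only real obstacle. The naive point --- any point all of whose clopen neighbourhoods carry unbounded total variation, which exists by a one-line compactness argument --- need not satisfy $(\star)$, since it may be a ``heavy atom'' of the $\mu_n$ (for instance $\mu_n=n\delta_x-n\delta_{x_n}$ with $x_n\to x$ is anti-Nikodym and has $|\mu_n(\{x\})|\to\infty$), in which case no clopen subset of a neighbourhood that avoids the point need carry any mass. The device that makes the argument work is the observation that a point $y$ possessing \emph{some} clopen neighbourhood $[C_y]$ with $\sup_n|\mu_n|([C_y]\setminus\{y\})<\infty$ necessarily has $\sup_n|\mu_n(\{y\})|<\infty$, so that finitely many such points cannot account for the unboundedness of $\|\mu_n\|$; this is precisely what drives the contradiction in the second paragraph. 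Everything else is routine measure theory and bookkeeping.
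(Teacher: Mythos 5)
Your proof is correct. For the record, the paper does not prove this lemma at all --- it only refers to Sobota \cite[Lemmas 4.4 and 4.7]{Sob17} --- so there is no internal argument to compare with; your write-up is a valid self-contained substitute following the same standard route as that reference: first isolate a concentration point by a compactness argument, then extract the antichain by a sliding-hump recursion. The two genuinely delicate points are both handled properly: the concentration property must be formulated for $[C]\setminus\{x\}$ rather than for $[C]$ (otherwise the unbounded variation could sit on the atom $\{x\}$ and be useless for producing sets avoiding $x$), and your compactness argument correctly exploits pointwise boundedness on the clopen sets $C_{y_i}$ to control the atomic masses $|\mu_n(\{y_i\})|$; likewise the factor-of-$4$ passage from $|\nu|(O)$ large to a single clopen $[A]\subset O$ with $|\nu(A)|$ large is justified correctly via inner regularity, Hahn decomposition of the real and imaginary parts, and clopen approximation of Borel sets. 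The only cosmetic remark is that the uniform bound $R_{\A_0}=\sup_n\max_{\C\subset\A_0}|\mu_n(\bigvee\C)|$ is slightly more than the lemma requires (one only needs to beat the $n$-th maximum for the indices $n$ actually selected), but using it is harmless and makes the bookkeeping cleaner.
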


Every point $x\in K_\A$ satisfying the conclusion of Lemma \ref{lemma:aN_antichain} is called \textit{a Nikodym concentration point of the sequence $\la\mu_n\colon\ n\in\w\ra$}. Properties of the set of all Nikodym concentration points of a given sequence were studied in Sobota \cite[Section 4]{Sob17}.

\subsection{Anti-Grothendieck sequences}

Similarly to anti-Nikodym sequences we define anti-Grothendieck sequences.

\begin{definition}
A sequence $\la\mu_n\colon\ n\in\w\ra$ of measures on a Boolean algebra $\A$ is \textit{anti-Grothendieck} if it is weak* convergent to the zero measure but not weakly convergent.
\end{definition}

Note that by the Banach--Steinhaus theorem (the Uniform Boundedness Principle) every anti-Grothendieck sequence is uniformly bounded.

A (far) analogon of Lemma \ref{lemma:aN_antichain} for anti-Grothendieck sequences is the following well-known consequence of the Dieudonn\'e--Grothendieck theorem (see e.g. Diestel \cite[Theorem VII.14, page 98]{Die84}).

\begin{lemma}\label{lemma:aG_antichain}
Let $\A$ be a Boolean algebra and $\la\mu_n\colon\ n\in\w\ra$ an anti-Grothendieck sequence of measures on $\A$. Then, there exist $X\in[\w]^\w$, an antichain $\la A_n\colon\ n\in X\ra$ in $\A$, and $\varepsilon>0$ such that for every $n\in X$ the following inequality holds:
\[\big|\mu_n\big(A_n\big)\big|>\varepsilon.\]\hfill$\Box$
\end{lemma}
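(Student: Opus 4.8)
The plan is to read the lemma off the Dieudonn\'e--Grothendieck theorem (Diestel \cite[Theorem VII.14, page 98]{Die84}), whose negative form asserts that a norm-bounded family of finitely additive measures on $\A$ which fails to be relatively weakly compact cannot be uniformly exhaustive, i.e.\ there is a pairwise disjoint sequence in $\A$ on which the family is not uniformly small. So the first step is to verify that our anti-Grothendieck sequence $\la\mu_n\colon n\in\w\ra$, regarded as a subset $M=\{\mu_n\colon n\in\w\}$ of $C\big(K_\A\big)^*$, is bounded and not relatively weakly compact. Boundedness is exactly the remark preceding the lemma. For the failure of relative weak compactness, suppose towards a contradiction that $M$ is relatively weakly compact; then its weak closure $L$ is weakly compact, hence weak*-compact (the weak* topology is coarser and Hausdorff), hence weak*-closed, so $L$ contains the weak* limit $0$ of $\la\mu_n\colon n\in\w\ra$ together with all the $\mu_n$. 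On the weakly compact set $L$ the weak and the weak* topologies coincide, since the identity is a continuous bijection from a compact space to a Hausdorff one; therefore $\la\mu_n\colon n\in\w\ra$, being weak* convergent to $0$, is weakly convergent to $0$, contradicting that it is anti-Grothendieck.

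Now the Dieudonn\'e--Grothendieck theorem applies to $M$ and produces $\delta>0$ together with a pairwise disjoint sequence $\la E_j\colon j\in\w\ra$ in $\A$ such that $\sup_{n\in\w}\big|\mu_n(E_j)\big|>\delta$ for every $j$ (discard the $j$'s for which the supremum is $\le\delta$ and reindex). For each $j$ choose $n_j\in\w$ with $\big|\mu_{n_j}(E_j)\big|>\delta$. Since every single measure $\mu_n$ has finite variation, it is exhaustive: for the pairwise disjoint $\la E_j\colon j\in\w\ra$ one has $\sum_{j\le N}\big|\mu_n(E_j)\big|\le\|\mu_n\|$ for all $N$, hence $\mu_n(E_j)\to 0$ as $j\to\infty$. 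Consequently no index can occur infinitely often among the $n_j$, so after thinning out $\la E_j\colon j\in\w\ra$ we may assume that $j\mapsto n_j$ is strictly increasing. Setting $X=\{n_j\colon j\in\w\}\in[\w]^\w$, $A_{n_j}=E_j$ and $\varepsilon=\delta$, the family $\la A_n\colon n\in X\ra$ is an antichain in $\A$ with $\big|\mu_n(A_n)\big|>\varepsilon$ for every $n\in X$, which is the assertion of the lemma.

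The only genuine content is the invocation of the Dieudonn\'e--Grothendieck theorem, so the point requiring care is that one must use it in the form valid for finitely additive measures on an arbitrary Boolean algebra $\A$ (equivalently, for regular Borel measures on the Stone space $K_\A$), with the witnessing disjoint sequence living inside $\A$ itself rather than merely in the Borel $\sigma$-algebra of $K_\A$; once that form of the theorem is in hand, everything else --- the topological observation forcing the failure of relative weak compactness and the bookkeeping aligning the measure indices with the sets --- is routine.
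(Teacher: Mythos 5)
Your proof is correct and follows exactly the route the paper intends: the paper states this lemma without proof, as a ``well-known consequence of the Dieudonn\'e--Grothendieck theorem'' with the very reference you use, and your argument (boundedness via Banach--Steinhaus, failure of relative weak compactness via the coincidence of the weak and weak* topologies on a weakly compact set, then the disjoint-sequence criterion plus exhaustiveness of each single $\mu_n$ to make the index map injective) is the standard way to extract the stated form. You also correctly flag the one point needing care, namely that the witnessing disjoint sequence must be taken inside $\A$ rather than among open subsets of $K_\A$, which is handled by regularity of the measures on the Stone space.
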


\subsection{Measures and almost disjoint families}

The following fact is folklore --- see e.g. Brech \cite[Lemma 2.1]{Bre06} and Sobota \cite[Lemma 2.6]{Sob17} for different proofs. Recall that a family $\HH\subset[\w]^\w$ is \textit{almost disjoint} if $A\cap B$ is finite for every distinct $A,B\in\HH$.

\begin{lemma}\label{lemma:measures_ad}
Let $\HH$ be an uncountable family of infinite almost disjoint subsets of $\omega$ and let $\la A_n\colon\ n\in\w\ra$ be an antichain in a Boolean algebra $\A$. Assume that $\bigvee_{n\in H}A_n\in\A$ for every $H\in\HH$. Then, for every sequence $\la\mu_n\colon\ n\in\w\ra$ of measures on $\A$ there exists $H_0\in\HH$ such that for every $k\in\w$ the following equality holds:
\[\mu_k\Big(\bigvee_{n\in H_0}A_n\Big)=\sum_{n\in H_0}\mu_k\big(A_n\big).\]\hfill$\Box$
\end{lemma}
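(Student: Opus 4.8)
The plan is to reduce everything to a single measure. It suffices to show that for every measure $\mu$ on $\A$ the set
\[
N_\mu=\Big\{H\in\HH:\ \mu\big(\textstyle\bigvee_{n\in H}A_n\big)\neq\sum_{n\in H}\mu(A_n)\Big\}
\]
is countable: granting this, $\bigcup_{k\in\w}N_{\mu_k}$ is countable, and since $\HH$ is uncountable any $H_0\in\HH\setminus\bigcup_{k\in\w}N_{\mu_k}$ is as required. Note that for each $H$ the series $\sum_{n\in H}\mu(A_n)$ converges absolutely, because the pairwise disjointness of the $A_n$ yields $\sum_{n\in F}|\mu(A_n)|\le\|\mu\|$ for every finite $F\subseteq\w$.

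To study $N_\mu$ I would pass to the Stone space. Write $B_H=\bigvee_{n\in H}A_n\in\A$ and let $\mu$ also denote its (countably additive) regular Borel extension to $K_\A$. Put $R_H=[B_H]\setminus\bigcup_{n\in H}[A_n]$, a Borel (in fact closed) subset of $K_\A$. Since $[B_H]$ is the disjoint union of the open set $\bigcup_{n\in H}[A_n]$ and $R_H$, countable additivity gives $\mu([B_H])=\sum_{n\in H}\mu([A_n])+\mu(R_H)$, so $H\in N_\mu$ iff $\mu(R_H)\neq0$. Thus it is enough to show $\mu(R_H)=0$ for all but countably many $H$, and, replacing $\mu$ by its variation $|\mu|$ (a finite positive measure, with $|\mu(R_H)|\le|\mu|(R_H)$), it is enough to prove this when $\mu$ is a finite positive measure.

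The crux — and the step I expect to be the main obstacle — is that the sets $R_H$, $H\in\HH$, are pairwise disjoint; once this is established, a finite positive measure can charge only countably many of them (for each $m$, at most $m\cdot\mu(K_\A)$ of them can have measure $\ge1/m$), which finishes the proof. Disjointness relies on the observation that if $S,T\subseteq\w$ are disjoint and $\bigvee_{n\in S}A_n,\bigvee_{n\in T}A_n$ exist in $\A$, then $[B_S]\cap[B_T]=\emptyset$: for each $m\in S$ we have $\bigcup_{n\in T}[A_n]\subseteq K_\A\setminus[A_m]$, and since by Stone duality $[B_T]$ is the \emph{smallest} clopen set containing $\bigcup_{n\in T}[A_n]$ we get $[B_T]\cap[A_m]=\emptyset$; hence $\bigcup_{m\in S}[A_m]\subseteq K_\A\setminus[B_T]$, and the smallest clopen superset $[B_S]$ of the left-hand side is then contained in $K_\A\setminus[B_T]$. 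Here it is essential to use the minimality of $[B_S]$ rather than the topological closure of $\bigcup_{n\in S}[A_n]$, which need not be clopen; this is the delicate point.

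Finally, given distinct $H_1,H_2\in\HH$, the set $F=H_1\cap H_2$ is finite, so $B_{H_i}=\bigvee_{n\in F}A_n\vee B_{H_i\setminus F}$, where $B_{H_i\setminus F}$ exists because it differs from $B_{H_i}$ only by finitely many atoms; hence $[B_{H_i}]=\bigcup_{n\in F}[A_n]\cup[B_{H_i\setminus F}]$. Applying the identity $(P\cup Q)\cap(P\cup R)=P\cup(Q\cap R)$ together with the observation above (for the disjoint sets $H_1\setminus F$ and $H_2\setminus F$) gives $[B_{H_1}]\cap[B_{H_2}]=\bigcup_{n\in F}[A_n]$, and therefore
\[
R_{H_1}\cap R_{H_2}=\big([B_{H_1}]\cap[B_{H_2}]\big)\setminus\bigcup_{n\in H_1\cup H_2}[A_n]=\emptyset .
\]
All remaining details (absolute convergence of the series, the elementary counting argument for finite measures, the countable diagonalization over $k$) are routine.
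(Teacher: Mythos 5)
Your proof is correct. The paper itself gives no argument for this lemma (it is stated as folklore with pointers to Brech's Lemma 2.1 and Sobota's Lemma 2.6), and your route --- passing to the Stone space, isolating the residual sets $R_H=[B_H]\setminus\bigcup_{n\in H}[A_n]$, showing via the least-upper-bound characterization of $[B_H]$ that they are pairwise disjoint, and concluding that each finite variation measure can charge only countably many of them --- is precisely the standard argument behind those references, with the one genuinely delicate point (minimal clopen superset versus topological closure) correctly identified and handled.
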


\section{Trees on forcings and sequences of measures\label{section:trees_on_forcings}}

Throughout the \underline{whole} paper $V$ denotes the set-theoretic universe. In the following section we assume that $\IP$ is a notion of forcing and $G$ is a $\IP$-generic filter over the ground model $V$.

\begin{proposition}\label{prop:tree_nikodym}
Let $\A$ be a ground model Boolean algebra. Let $\la\name{\mu}_n\colon\ n\in\w \ra$ be a sequence of $\IP$-names for measures on $\A$ and $\name{x}$ a name for a point in $K_\A$. Assume that $1_{\IP}$ forces the following formulas:
\[``\la\name{\mu}_n\colon\ n\in\w \ra\text{ is anti-Nikodym}",\]
\[``\name{x}\text{ is a Nikodym concentration point of }\la\name{\mu}_n\colon\ n\in\w \ra",\text{ and}\]
\[``\big\|\name{\mu}_n\big\|<n\text{ for every }n\in\w".\]
Then, there exists a tree $T\subset\IP^{<\omega}$ such that to every $t=\la p_0,\ldots,p_{n-1}\ra\in T$ ($n\ge1$) there are associated the following objects:
\begin{itemize}
	\item a set $a_t\in[\omega]^{<\omega}$,
	\item a sequence $\la A_m^t\in\A\colon\ m\in a_t\ra$,
	\item a sequence $\la b_{t\uhr k}^t\subset a_{t\uhr k}\colon\ 1\le k<n\ra$,
	\item a sequence $\la e_{t\uhr k,m}^t\subset b_{t\uhr k}^t\colon\ m\in a_{t\uhr i},1\le i<k<n\ra$,
	\item a sequence $\la l_{t\uhr k}^t\in b_{t\uhr k}^t\colon\ 1\le k<n\ra$, 
\end{itemize}
satisfying the following conditions:
\begin{enumerate}[(i)]
	\item $\max a_{t\uhr k}<\min a_{t\uhr (k+1)}$ for all $1\le k<n$,
	\item $\big|a_{t\uhr k}\big|>(k+1)\cdot\big|b_{t\uhr k}^t\big|$ for all $1\le k<n$,
	\item $b_{t\uhr k}^t=\big\{l_{t\uhr k}^t\big\}\cup\bigcup_{1\le i<k}\bigcup_{m\in a_{t\uhr i}}e_{t\uhr k,m}^t$ for all $1\le k<n$,
	\item $\big\{A^{t\uhr k}_m\colon\ m\in a_{t\uhr k},m\neq l_{t\uhr k}^t,1\le k<n\big\}\cup\big\{A^t_m\colon\ m\in a_t\big\}$ is an antichain in $\A$,
	\item $p_{n-1}\forces e_{t\uhr k,m}^t=\big\{l\in a_{t\uhr k}\colon\ \big|\name{\mu}_m\big|\big(A_l^{t\uhr k}\big)\ge 1/2^k\big\}$ for all $m\in a_{t\uhr i}$ and $1\le i<k<n$,
	\item either $p_{n-1}\forces\Big(l_{t\uhr k}^t=\min a_{t\uhr k}\text{ and }\name{x}\not\in\bigvee_{m\in a_{t\uhr k}}A_m^{t\uhr k}\Big)$, or $p_{n-1}\forces\name{x}\in A_{l_{t\uhr k}^t}^{t\uhr k}$, for all $1\le k<n$,
	\item $p_{n-1}\forces\forall \C\subset\big\{A_l^{t\uhr k}\colon\ l\in a_{t\uhr k},l\neq l_{t\uhr k}^t,1\le k<n\big\}\ \forall m\in a_t\colon\ $
\[\big|\name{\mu}_m\big(A_m^t\big)\big|>\big|\name{\mu}_m(\bigvee\C)\big|+n.\]
\end{enumerate}
Besides, for every $t\in T$ the set $D_t^T=\big\{q\colon\ t\bigvid q\in T\big\}\in V$ is dense in $\IP$.
\end{proposition}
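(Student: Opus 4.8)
The plan is to build the tree $T$ by recursion on the length of its nodes, maintaining a bookkeeping scheme so that at each stage the newly added data satisfy (i)--(vii) and the density condition holds. Since the conclusion is purely about the ground model poset $\IP$ (the tree $T$, the sets $a_t$, the Boolean-algebra elements, and the finite combinatorial data all live in $V$; only the forcing statements refer to names), the whole construction is carried out inside $V$. The root of $T$ is the empty sequence $\langle\rangle$, which carries no associated data. Given a node $t=\langle p_0,\dots,p_{n-1}\rangle\in T$, together with all the data attached to it and to its initial segments, I must describe which one-step extensions $t\bigvid q$ are put into $T$ and what data $a_{t\bigvid q}$, $\langle A^{t\bigvid q}_m\rangle$, the $b$'s, $e$'s and $l$'s receive; and I must check that $\{q:t\bigvid q\in T\}$ is dense below $1_\IP$ (in fact I will arrange it to be dense in all of $\IP$, as stated).

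**The recursive step.** Fix $t=\langle p_0,\dots,p_{n-1}\rangle\in T$ and an arbitrary condition $r\in\IP$; I must find $q\le r$ with $t\bigvid q\in T$. First, using that $p_{n-1}$ forces $\name x$ to be a Nikodym concentration point of $\langle\name\mu_n\rangle$ and applying Lemma~\ref{lemma:aN_antichain} inside a name, together with the forced bound $\|\name\mu_k\|<k$, find a condition $q_0\le r$ deciding enough: it decides the value $l^{t}_{t\uhr k}$ for the current level (choosing, for each $1\le k< n$ that still needs it, whether $\name x$ lies in some $A^{t\uhr k}_{l}$ or whether $\name x\notin\bigvee_{m\in a_{t\uhr k}}A^{t\uhr k}_m$ with $l^{t}_{t\uhr k}=\min a_{t\uhr k}$ — this is where (vi) is secured), and it decides the sets $e^{t}_{t\uhr k,m}=\{l\in a_{t\uhr k}:|\name\mu_m|(A^{t\uhr k}_l)\ge 1/2^k\}$ for the relevant $m,k$, giving (v). Next, apply the defining property of a Nikodym concentration point once more: working below $q_0$, to the finite family $\A_0$ consisting of all the $A^{t\uhr k}_l$ with $l\ne l^{t}_{t\uhr k}$ already appearing along $t$ (so that $\name x\notin\bigvee\A_0$, using (vi) and (iv)), and to the constant $M=n$, get $X\in[\w]^\w$ and an antichain $\{A_m:m\in X\}$ in $\one_\A\setminus\bigvee\A_0$, disjoint from $\name x$, with $|\name\mu_m(A_m)|>\max_{\C\subset\A_0}|\name\mu_m(\bigvee\C)|+n$ for $m\in X$; a further condition $q\le q_0$ decides an initial finite piece $a_t\subset X$ of this data, large enough that (i) holds (push $\min a_t$ above $\max a_{t\uhr(n-1)}$) and that, recalling $b^{t}_{t\uhr k}$ is determined by (iii) from the already-decided $l$'s and $e$'s, the cardinality inequality (ii), $|a_{t\uhr k}|>(k+1)|b^{t}_{t\uhr k}|$, holds — note $b^{t}_{t\uhr k}$ only ever grows as $n$ increases, but at the moment $a_{t\uhr k}$ was chosen we may have taken it huge compared to the eventual bound, so one must be slightly careful and choose $a_s$ at stage $|s|$ already large enough to absorb all future contributions to $b^s_s$; this is arranged by a standard reservation argument since the total number of pairs $(i,m)$ with $m\in a_{t\uhr i}$ that can ever contribute is controlled. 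Finally (vii) is just a restatement of the inequality from the concentration-point application, and (iv) holds because the $A_m$ for $m\in a_t$ were chosen in $\one_\A\setminus\bigvee\A_0$ and pairwise disjoint, while $\A_0$ already contains every earlier $A^{t\uhr k}_l$ with $l\ne l^t_{t\uhr k}$. We declare $t\bigvid q\in T$ with the data just decided. Since $r$ was arbitrary, $D^T_t$ is dense; and it lies in $V$ because the entire construction is a recursion on ground-model objects.

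**The main obstacle** is the interplay between condition (ii) and condition (iii): the set $b^{t\uhr k}_{t\uhr k}$ is not known at the time $a_{t\uhr k}$ is created, since it accumulates the sets $e^{t\uhr k}_{t\uhr k,m}$ over all later $m\in a_{t\uhr i}$ for $i<k$ (and these $m$'s themselves are only revealed at later stages). One must therefore either (a) bound, in advance, the eventual size of $b^{t\uhr k}_{t\uhr k}$ — impossible in general, since the tree is infinitely branching and a node at level $k$ has infinitely many descendants — or, the correct fix, (b) reinterpret (ii) as a condition that is \emph{checked at node $t$ for its own current $b^t_{t\uhr k}$} (the version assembled from the data decided along $t$ only), not for some limiting object; then at each finite stage one chooses $a_t$ large enough relative to the finitely many $b^{s}_{s}$ that have been finalized so far along the branch through $t$. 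Reading (ii)--(iii) this way — the $b$'s are superscripted by $t$ precisely to signal that they are the level-$n$ approximations — the recursion goes through, and the verification of (i)--(vii) at the new node is then a routine matter of unwinding the decisions made by $q$. A secondary point requiring attention is to keep the antichain condition (iv) coherent across the whole tree: this follows automatically because along any branch the family $\A_0$ used at stage $n+1$ contains all of $\{A^{t\uhr k}_m:m\in a_{t\uhr k},m\ne l^t_{t\uhr k},1\le k<n\}$, and the new elements $A^t_m$ are chosen disjoint from $\bigvee\A_0$ and from each other.
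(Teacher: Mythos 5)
Your overall architecture is the paper's: build $T$ recursively in $V$, at each node use Lemma \ref{lemma:aN_antichain} below a condition deciding the $l$'s, then shrink to decide a finite piece $a_t$ of the new antichain and the sets $e$, and take $D_t^T$ to be the range of the map $r\mapsto q$. The genuine problem is your treatment of condition (ii), which is the one nontrivial quantitative point of the proposition. You first gesture at the right idea (choose $a_{t\uhr k}$ at stage $k$ large enough to absorb all future contributions to $b^t_{t\uhr k}$), but then in your ``main obstacle'' paragraph you declare that bounding the eventual size of $b^t_{t\uhr k}$ in advance is ``impossible in general'' and propose instead to \emph{reinterpret} (ii) as a statement about a partial approximation. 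That is not a proof of the proposition as stated, and the alleged impossibility is false. Two things you have misread: the indices $m$ contributing to $b^t_{t\uhr k}=\{l^t_{t\uhr k}\}\cup\bigcup_{1\le i<k}\bigcup_{m\in a_{t\uhr i}}e^t_{t\uhr k,m}$ range over $a_{t\uhr i}$ for $i<k$, i.e.\ over sets fixed \emph{before} $a_{t\uhr k}$ is chosen, not ``revealed at later stages''; and although the values of the sets $e^t_{t\uhr k,m}$ do depend on the descendant $t$, their \emph{sizes} are bounded uniformly: since the $A^{t\uhr k}_l$ are pairwise disjoint and $1_\IP\forces\|\name{\mu}_m\|<m$, any condition forces $\big|\{l\in a_{t\uhr k}:|\name{\mu}_m|(A^{t\uhr k}_l)\ge 1/2^k\}\big|\le m\cdot 2^k$. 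Hence $|b^t_{t\uhr k}|\le 1+\sum_{i<k}\sum_{m\in a_{t\uhr i}}m\cdot 2^k$ for \emph{every} $t$ extending $t\uhr k$, a bound computable at stage $k$; the paper maintains the auxiliary inductive hypothesis $|a_{t\uhr k}|>(k+1)^2(\max a_{t\uhr(k-1)}+1)^2 2^k$ precisely so that this bound yields (ii). You never state the estimate $|e^t_{t\uhr k,m}|\le m\cdot 2^k$, which is the only place the hypothesis $\|\name{\mu}_n\|<n$ is really used, and without it your ``standard reservation argument'' (counting pairs $(i,m)$ alone) does not control $|b^t_{t\uhr k}|$ at all, since each $e^t_{t\uhr k,m}$ could a priori be all of $a_{t\uhr k}$.

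A minor further slip: applying Lemma \ref{lemma:aN_antichain} with $M=n$ when extending a node of length $n$ gives (vii) with ``$+n$'' at the \emph{new} node of length $n+1$, whereas the statement there requires ``$+(n+1)$''; you need $M=n+1$, as in the paper. The rest (the handling of (i), (iii)--(vi), the antichain coherence in (iv) via putting all earlier $A^{t\uhr k}_l$ with $l\ne l^t_{t\uhr k}$ into $\A_0$, and the density and ground-model membership of $D_t^T$) matches the paper's argument and is fine.
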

\begin{proof}
%

The first level of the tree $T$ is constructed as follows. Fix $p\in\IP$. Since $1_\IP$ forces that $\name{x}$ is a Nikodym concentration point of $\la\name{\mu}_n\colon\ n\in\w \ra$, by Lemma \ref{lemma:aN_antichain}, there exist $q_p\le p$, a name $\name{X}$ for an infinite subset of $\omega$ and a name $\name{f}$ for a map from $\name{X}$ to $\A$ whose range is an antichain with elements not containing $\name{x}$, such that $q_p$ forces that for every $n\in\name{X}$ we have 
\[\big|\name{\mu}_n\big(\name{f}(n)\big)\big|>1.\]

Now, there exist $r_p\le q_p$, $a_{r_p}\in[\w]^{<\w}$ of size $\big|a_{r_p}\big|>2^3$ and $\la A_m^{r_p}\in\A\colon\ m\in a_{r_p}\ra$ such that $r_p$ forces that $a_{r_p}\subset\name{X}$ and $\name{f}(m)=A_m^{r_p}$ for every $m\in a_{r_p}$ (hence $\la A_m^{r_p}\colon\ m\in a_{r_p}\ra$ is also an antichain). 

The map $p\mapsto r_p$ (with the domain $\IP$) has the range which is a dense subset of $\IP$ --- call this range $D_\emptyset^T$, i.e. $D_\emptyset^T$ will be the first level of the tree $T$. Note that trivially $D_\emptyset^T\in V$. For every $r_p\in D_\emptyset^T$ and $m\in a_{r_p}$ rename $a_{\la r_p\ra}=a_{r_p}$ and $A_m^{\la r_p\ra}=A_m^{r_p}$ (if for different $p$ and $p'$ we have got $r_p=r_{p'}$, then assume that $a_{r_p}=a_{r_{p'}}$ and $A_m^{r_p}=A_m^{r_{p'}}$ for all $m\in a_{r_p}$). As the conditions $(i)-(vii)$ trivially hold, this finishes the first step.

\medskip

Assume now that we have constructed $t=\la p_0,\ldots,p_{n-1}\ra$ for some $n\ge 1$ along with $\la a_{t\uhr k}\colon\ 1\le k\le n\ra$, $\la A^{t\uhr k}_m\colon\ m\in a_{t\uhr k}, 1\le k\le n\ra$, $\la l_{t\uhr k}^{t\uhr n'}\colon\ 1\le k<n'\le n\ra$, $\la e_{t\uhr k,m}^{t\uhr n'}\colon\ m\in a_{t\uhr i},1\le i<k<n'\le n\ra$, and $\la b_{t\uhr k}^{t\uhr n'}\colon\ 1\le k<n'\le n\ra$ satisfying $(i)$-$(vii)$ whenever relevant. Assume also that for every $1\le k\le n$ we have:
\[\tag{$*$}\big|a_{t\uhr k}\big|>(k+1)^2\cdot\big(\max a_{t\uhr(k-1)}+1\big)^2\cdot2^k,\]
where we assign $\max a_\emptyset=0$.

Fix $p\in\IP$. There are $q_p\le p$ and a sequence $\la l_k^{q_p}\in a_{t\uhr k}\colon\ 1\le k\le n\ra$ such that for every $1\le k\le n$ if $q_p\forces\name{x}\not\in\bigvee_{m\in a_{t\uhr k}}A_m^{t\uhr k}$, then $l_k^{q_p}=\min a_{t\uhr k}$, and
$q_p\forces\name{x}\in A_{l_k^{q_p}}^{t\uhr k}$ otherwise. Put:
\[\A_0=\bigcup_{1\le k\le n}\big\{A_m^{t\uhr k}\colon\ m\in a_{t\uhr k},m\neq l_k^{q_p}\big\}.\]
Hence, $q_p$ forces that $\name{x}\not\in\bigvee\A_0$, and so, by Lemma \ref{lemma:aN_antichain}, there are names $\name{X}$ for an infinite subset of $\omega$ and $\name{f}$ for a map from $\name{X}$ to $\A$ whose range is an antichain with elements disjoint with $\bigvee\A_0$ and not containing $\name{x}$, such that $q_p$ forces that for every $m\in\name{X}$ and $\C\subset\A_0$ we have
\[\big|\name{\mu}_m\big(\name{f}(m)\big)\big|>\big|\name{\mu}_m\big(\bigvee\C\big)\big|+n+1.\]

There exist $r_p\le q_p$, $a_{r_p}\in[\w]^{<\w}$ of size
\[\big|a_{r_p}\big|>(n+2)^2\cdot\big(\max a_t+1\big)^2\cdot2^{n+1}\]
with $\max a_t<\min a_{r_p}$, and $\la A_m^{r_p}\in\A\colon\ m\in a_{r_p}\ra$ such that $r_p$ forces that $a_{r_p}\subset\name{X}$ and $\name{f}(m)=A_m^{r_p}$ for every $m\in a_{r_p}$ (hence $\la A_m^{r_p}\colon\ m\in a_{r_p}\ra$ is also an antichain).

Finally, there exist $s_p\le r_p$ and for every $1\le i<k\le n$ and every $m\in a_{t\uhr i}$ a set $e_{k,m}^{s_p}\subset a_{t\uhr k}$ of size $\big|e_{k,m}^{s_p}\big|\le m\cdot2^k$, such that $s_p$ forces that
\[\big\{l\in a_{t\uhr k}\colon\ \big|\name{\mu}_{m}\big|\big(A^{t\uhr k}_l\big)\ge 1/2^k\big\}=e_{k,m}^{s_p}\]
for all $k$ and $m$ as above (recall here that $1_\IP\forces\big\|\name{\mu}_m\big\|\le m$ for every $m\in\w$). 

As in the first step, the map $p\mapsto s_p$ (with the domain $\IP$) has the range which is a dense subset of $\IP$ --- call this range $D_t^T$, i.e. for all $q\in D_t^T$ the sequence $t\bigvid q$ will be in the tree $T$. Again, $D_t^T\in V$. For every $s_p\in D_t^T$ and $m\in a_{r_p}$ rename $a_{t\bigvid s_p}=a_{r_p}$, $A_m^{t\bigvid s_p}=A_m^{r_p}$ and $l_{t\uhr k}^{t\bigvid s_p}=l_k^{q_p}$. Finally, for every $s_p\in D_t^T$, $1\le i<k\le n$ and $m\in a_{t\uhr i}$ rename $e_{t\uhr k,m}^{t\bigvid s_p}=e_{k,m}^{s_p}$ and put:
\[b_{t\uhr k}^{t\bigvid s_p}=\Big\{l_{t\uhr k}^{t\bigvid s_p}\Big\}\cup\bigcup_{j=1}^{k-1}\bigcup_{m\in a_{t\uhr j}}e_{t\uhr k,m}^{t\bigvid s_p}.\]

It is easy to see that all the demanded conditions (including auxiliary $(*)$), maybe except for $(ii)$, are satisfied. To show $(ii)$, fix $s_p\in D_t^T$ and $1\le k\le n$. Write $t'=t\bigvid s_p$ for simplicity and note that $t\uhr k=t'\uhr k$. We have:
\[\big|b_{t'\uhr k}^{t'}\big|\le 1+\sum_{j=1}^{k-1}\sum_{m\in a_{t\uhr j}}\big|e_{t\uhr k,m}^{t'}\big|\le 1+\sum_{j=1}^{k-1}\sum_{m\in a_{t\uhr j}}m\cdot 2^k\le1+\sum_{j=1}^{k-1}\big(\max a_{t\uhr j}\big)^2\cdot 2^k\le\]
\[\le 1+(k-1)\cdot\big(\max a_{t\uhr(k-1)}\big)^2\cdot 2^k\le(k+1)\cdot\big(\max a_{t'\uhr(k-1)}+1\big)^2\cdot 2^k.\]
Combining this with $(*)$ we obtain:
\[\frac{\big|a_{t'\uhr k}\big|}{\big|b_{t'\uhr k}^{t'}\big|}>\frac{(k+1)^2\cdot\big(\max a_{t'\uhr(k-1)}+1\big)^2\cdot2^k}{(k+1)\cdot\big(\max a_{t'\uhr(k-1)}+1\big)^2\cdot 2^k}=k+1,\]
which yields (ii).
\end{proof}

The proof of the next proposition is obviously similar to the proof of Proposition \ref{prop:tree_nikodym}, but not identical.

\begin{proposition}\label{prop:tree_grothendieck}
Let $\A$ be a ground model Boolean algebra. Assume that $\la\name{B_n}\colon\ n\in\w\ra$ is a sequence of $\IP$-names for elements of $\A$ such that $1_\IP$ forces that $\la\name{B_n}\colon\ n\in\w\ra$ is an antichain. Assume also that $\la\name{\mu}_n\colon\ n\in\w\ra$ is a sequence of $\IP$-names for measures on $\A$ and that there exist rational numbers $M,\varepsilon>0$ such that for all $n\in\w$ we have:
\[1_\IP\forces\big\|\name{\mu}_n\big\|<M\text{ and }\big|\name{\mu}_n\big(\name{B}_n\big)\big|>2\varepsilon.\]
Then, there exists a tree $T\subset\IP^{<\omega}$ such that to every $t=\la p_0,\ldots,p_{n-1}\ra\in T$ ($n\ge1$) there are associated the following objects:
\begin{itemize}
	\item a set $a_t\in[\omega]^{<\omega}$,
	\item a sequence $\la A_m^t\in\A\colon\ m\in a_t\ra$,
	\item a sequence $\la b_{t\uhr k}^t\subset a_{t\uhr k}\colon\ 1\le k<n\ra$,
	\item a sequence $\la c_{t\uhr k}^t\subset b_{t\uhr k}^t\colon\ 1\le k<n\ra$,
	\item a sequence $\la e_{t\uhr k,m}^t\subset b_{t\uhr k}^t\colon\ m\in a_{t\uhr i},1\le i<k<n\ra$,
	\item a name $\name{X}_t$ for a subset of $\omega$,
\end{itemize}
satisfying the following conditions:
\begin{enumerate}[(i)]
	\item $\max a_{t\uhr k}<\min a_{t\uhr (k+1)}$ for all $1\le k<n$,
	\item $\big|a_{t\uhr k}\big|>(k+1)\cdot\big|b_{t\uhr k}^t\big|$ for all $1\le k<n$,
	\item $b_{t\uhr k}^t=c_{t\uhr k}^t\cup\bigcup_{i<k}\bigcup_{m\in a_{t\uhr i}}e_{t\uhr k,m}^t$ for all $1\le k<n$,
	\item $1_\IP\forces\name{X}_t\in[\omega]^\omega$ and $\name{X}_{t\uhr(k+1)}\subset\name{X}_{t\uhr k}$ for all $1\le k<n$,
	\item $1_\IP\forces\big\{l\in a_{t\uhr k}\colon\ \big|\name{\mu}_m\big|\big(A^{t\uhr k}_l\big)\ge\varepsilon/2^{k+2}\big\}=\big\{l\in a_{t\uhr k}\colon\ \big|\name{\mu}_{m'}\big|\big(A^{t\uhr k}_l\big)\ge\varepsilon/2^{k+2}\big\}$ for all $m,m'\in\name{X}_t$ and $1\le k<n$,
	\item $p_{n-1}\forces a_t\subset\name{X}_t$ and $\name{B}_m=A_m^t$ for all $m\in a_t$,
	\item $p_{n-1}\forces c_{t\uhr k}^t=\big\{l\in a_{t\uhr k}\colon\ \exists m\in\name{X}_t\text{ such that } \big|\name{\mu}_m\big|\big(A_l^{t\uhr k}\big)\ge\varepsilon/2^{k+2}\big\}$ for all $1\le k<n$,
	\item $p_{n-1}\forces e_{t\uhr k,m}^t=\big\{l\in a_{t\uhr k}\colon\ \big|\name{\mu}_m\big|\big(A_l^{t\uhr k}\big)\ge\varepsilon/2^{k+2}\big\}$ for all $m\in a_{t\uhr i}$ and $1\le i<k<n$.
\end{enumerate}
Besides, for every $t\in T$ the set $D_t^T=\big\{q\colon\ t\bigvid q\in T\big\}\in V$ is dense in $\IP$.
\end{proposition}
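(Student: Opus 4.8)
The plan is to mirror the construction in Proposition~\ref{prop:tree_nikodym} almost verbatim, replacing the ``Nikodym concentration point'' data by the antichain data supplied by Lemma~\ref{lemma:aG_antichain} and Lemma~\ref{lemma:measures_ad}. We build $T$ level by level together with all the associated objects. The role of the ``slalom'' sets $e_{t\uhr k,m}^t$ is the same as before: at level $k$ we record, for each index $m$ occurring below $k$, which of the elements $A_l^{t\uhr k}$ ($l\in a_{t\uhr k}$) the measure $|\name\mu_m|$ charges by at least $\e/2^{k+2}$; since $\|\name\mu_m\|<M$, at most $M\cdot 2^{k+2}/\e$ many $l$'s qualify, so $|e_{t\uhr k,m}^t|\le \lceil M\cdot 2^{k+2}/\e\rceil$, a bound depending only on $k$ (not on $m$). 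This is the crucial difference from the Nikodym case, where the analogous bound was $m\cdot 2^k$ and hence one needed a name controlling $\|\name\mu_m\|$ by $m$; here a uniform bound $M$ is available from the hypothesis, which actually simplifies matters.

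The step-by-step construction. \emph{First level}: fix $p\in\IP$. Using $1_\IP\forces |\name\mu_n(\name B_n)|>2\e$ and that $\la\name B_n\ra$ is an antichain, choose a name $\name X_{\la p\ra}$ for an infinite subset of $\omega$ on which, roughly, the charging pattern at future levels will stabilize; concretely, thin out using Lemma~\ref{lemma:aG_antichain} and a fusion/reflection argument so that (iv) and (v) hold for this level. Then pick $q_p\le p$, a finite set $a_{r_p}\subset\omega$ of size $>(k+1)\cdot(\text{the }e\text{-bound})$ — i.e. large enough to guarantee (ii) at later stages, using the same auxiliary inequality $(*)$ as in the Nikodym proof with the $e$-bound now $\lceil M2^{k+2}/\e\rceil$ in place of $m2^k$ — and elements $A_m^{r_p}\in\A$ with $q_p\forces a_{r_p}\subset\name X_{\la p\ra}$ and $\name B_m=A_m^{r_p}$ for $m\in a_{r_p}$; this yields $p_{n-1}$ of~(vi). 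The map $p\mapsto(\text{this condition})$ has dense range $D_\emptyset^T\in V$, giving level one. \emph{Successor step}: given $t=\la p_0,\dots,p_{n-1}\ra$ with all data satisfying (i)--(viii), fix $p\in\IP$; first pass to $q_p\le p$ deciding, for every $1\le k\le n$, the set $c_{t\uhr k}=\{l\in a_{t\uhr k}:\exists m\in\name X_t\ |\name\mu_m|(A_l^{t\uhr k})\ge\e/2^{k+2}\}$ (finitely many subsets of finite sets, so a single condition decides all), and simultaneously decide $e_{t\uhr k,m}$ for all $m\in a_{t\uhr i}$, $i<k\le n$. Then thin the name $\name X_t$ to $\name X_{t\bigvid\cdot}$ so that (iv),(v) persist one level deeper and so that the antichain property of $\la\name B_m:m\in\name X\ra$ restricted to $\name X$ is retained, pick a large finite $a_{r_p}\subset\omega$ with $\max a_t<\min a_{r_p}$ satisfying the size requirement from $(*)$, and elements $A_m^{r_p}$ with $r_p\forces a_{r_p}\subset\name X$ and $\name B_m=A_m^{r_p}$. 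Rename everything as in Proposition~\ref{prop:tree_nikodym}: $a_{t\bigvid s_p}=a_{r_p}$, $A_m^{t\bigvid s_p}=A_m^{r_p}$, $b_{t\uhr k}^{t\bigvid s_p}=c_{t\uhr k}^{t\bigvid s_p}\cup\bigcup_{i<k}\bigcup_{m\in a_{t\uhr i}}e_{t\uhr k,m}^{t\bigvid s_p}$. The map $p\mapsto s_p$ has dense range $D_t^T\in V$.

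Verification of (i)--(viii). Items (i), (iii), (vi), (vii), (viii) are immediate from the renaming and the choices of the conditions above; (iv) and (v) are exactly what the name-thinning at each step was designed to deliver. For (ii) one runs the same computation as at the end of the proof of Proposition~\ref{prop:tree_nikodym}: $|b_{t'\uhr k}^{t'}|\le |c_{t'\uhr k}^{t'}|+\sum_{i<k}\sum_{m\in a_{t\uhr i}}|e_{t\uhr k,m}^{t'}|$, and bounding each $|e_{t\uhr k,m}^{t'}|$ and $|c_{t'\uhr k}^{t'}|$ by $\lceil M2^{k+2}/\e\rceil$ and using the strengthened form of $(*)$ (with $\max a_{t\uhr(k-1)}$ and a factor $2^k$ replaced by whatever makes the arithmetic close, e.g. $(k+1)^2\cdot(\max a_{t\uhr(k-1)}+1)^2\cdot\lceil M2^{k+2}/\e\rceil$) gives $|a_{t'\uhr k}|/|b_{t'\uhr k}^{t'}|>k+1$.

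The main obstacle I expect is the bookkeeping around the names $\name X_t$: unlike the sets $a_t,A_m^t,e_{t\uhr k,m}^t$, which are ground-model objects decided by the condition $p_{n-1}$, the $\name X_t$ are genuine names that must be thinned coherently down every branch so that (iv) and (v) — which are forced by $1_\IP$, not merely by $p_{n-1}$ — continue to hold. One has to be careful that the thinning used to stabilize the charging pattern at level $k+1$ is performed \emph{before} committing to $a_{t\uhr(k+1)}$, and that it does not depend on the particular condition in $D_t^T$ one is extending (so that a single name $\name X_{t\bigvid q}$ can be assigned to all $q$, or at worst one name per branch), exactly as the Nikodym proof handled the coincidence $r_p=r_{p'}$ by fiat. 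Everything else is a routine transcription of Proposition~\ref{prop:tree_nikodym} with the uniform bound $M$ replacing the diagonal bound $\|\name\mu_n\|<n$.
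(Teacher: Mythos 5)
Your overall strategy --- transcribe the proof of Proposition \ref{prop:tree_nikodym}, replace the concentration-point data by the given antichain $\la\name{B}_n\colon n\in\w\ra$, and exploit the uniform bound $M$ in place of the diagonal bound $\|\name{\mu}_n\|<n$ --- is the right one and matches the paper. But there is a concrete error in the order of operations in your successor step, and as written it is fatal. You first let $q_p$ decide the sets $c_{t\uhr k}=\{l\in a_{t\uhr k}\colon \exists m\in\name{X}_t\ |\name{\mu}_m|(A_l^{t\uhr k})\ge\varepsilon/2^{k+2}\}$ for all $1\le k\le n$, and only afterwards thin $\name{X}_t$. For the newest level $k=n$ this set is forced to be \emph{all} of $a_t$: by clause (vi) for $t$ we have $a_t\subset\name{X}_t$ and $A_l^t=\name{B}_l$ for $l\in a_t$, and by hypothesis $|\name{\mu}_l|(\name{B}_l)\ge|\name{\mu}_l(\name{B}_l)|>2\varepsilon>\varepsilon/2^{n+2}$, so every $l\in a_t$ witnesses its own membership (take $m=l$). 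Hence $|c_{t\uhr n}|=|a_{t\uhr n}|$, the bound $|c^{t'}_{t'\uhr k}|\le M\cdot 2^{k+2}/\varepsilon$ that you assert in the verification of (ii) fails, and (ii) itself would require $|a_{t\uhr n}|>(n+1)\cdot|a_{t\uhr n}|$, which is impossible. (For $k<n$ your order happens to be harmless only because the inductive clause (v) already stabilizes the charging pattern on $a_{t\uhr k}$ over $\name{X}_t$.) In addition, clause (vii) for the new node $t'=t\bigvid s_p$ refers to $\name{X}_{t'}$, not to $\name{X}_t$, so the set you decide is not even the set the proposition asks for.

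The fix is exactly the point you relegate to a cautionary remark at the end: the thinning must come \emph{first}. One chooses, independently of $p$, a name $\name{X}$ with $1_\IP\forces\name{X}\in[\w]^\w$, $\name{X}\subset\name{X}_t$, and the pattern $\{l\in a_{t\uhr k}\colon |\name{\mu}_m|(A_l^{t\uhr k})\ge\varepsilon/2^{k+2}\}$ constant for $m\in\name{X}$ on every level $1\le k\le n$ (pigeonhole over the finitely many possible patterns); only then does $q_p$ decide the common pattern $c_k^{q_p}$, which now equals the charging set of a \emph{single} measure on the (ground-model) antichain $\la A_l^{t\uhr k}\colon l\in a_{t\uhr k}\ra$ and therefore has size at most $M\cdot 2^{k+2}/\varepsilon$. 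Setting $\name{X}_{t\bigvid s_p}=\name{X}$ and defining $c$ relative to it, clauses (iv), (v) and (vii) hold verbatim and your computation for (ii) goes through. A minor further point: Lemma \ref{lemma:aG_antichain} plays no role inside this proof --- the antichain $\la\name{B}_n\ra$ is part of the hypotheses, and that lemma is only invoked in Theorem \ref{theorem:main} to produce those hypotheses.
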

\begin{proof}
The first level of the tree $T$ is constructed as follows. Fix $p\in\IP$. 
There exist $r_p\le p$, $a_{r_p}\in[\w]^{<\w}$ of size $\big|a_{r_p}\big|>2^4\cdot M/\varepsilon$ and $\la A_m^{r_p}\in\A\colon\ m\in a_{r_p}\ra$ such that $r_p$ forces that $\name{B}_m=A_m^{r_p}$ for every $m\in a_{r_p}$.

The map $p\mapsto r_p$ (with the domain $\IP$) has the range which is a dense subset of $\IP$ --- call this range $D_\emptyset^T$, i.e. $D_\emptyset^T$ will be the first level of the tree $T$. Note that trivially $D_\emptyset^T\in V$. For every $r_p\in D_\emptyset^T$ and $m\in a_{r_p}$ rename $a_{\la r_p\ra}=a_{r_p}$ and $A_m^{\la r_p\ra}=A_m^{r_p}$ (if for different $p$ and $p'$ we have got $r_p=r_{p'}$, then assume that $a_{r_p}=a_{r_{p'}}$ and $A_m^{r_p}=A_m^{r_{p'}}$ for all $m\in a_{r_p}$) and, finally, let $\name{X}_{\la r_p\ra}$ be a name for $\omega$. As the conditions $(i)-(viii)$ trivially hold, this finishes the first step.

\medskip

Assume now that we have constructed $t=\la p_0,\ldots,p_{n-1}\ra$ for some $n\ge 1$ along with $\la a_{t\uhr k}\colon\ 1\le k\le n\ra$, $\la\name{X}_{t\uhr k}\colon\ k\le n\ra$, $\la A^{t\uhr k}_m\colon\ m\in a_{t\uhr k}, 1\le k\le n\ra$, $\la e_{t\uhr k,m}^{t\uhr n'}\colon\ m\in a_{t\uhr i},1\le i<k<n'\le n\ra$, $\la c_{t\uhr k}^{t\uhr n'}\colon\ 1\le k<n'\le n\ra$, and $\la b_{t\uhr k}^{t\uhr n'}\colon\ 1\le k<n'\le n\ra$ satisfying $(i)$-$(viii)$ whenever relevant. Assume also that for every $1\le k\le n$ we have:
\[\tag{$*$}\big|a_{t\uhr k}\big|>(k+1)\cdot\big(\max a_{t\uhr(k-1)}+1\big)^2\cdot M\cdot2^{k+2}/\varepsilon,\]
where we assign $\max a_\emptyset=0$.

Fix $p\in\IP$. Since for every $1\le k\le n$ the set $a_{t\uhr k}$ is finite, there is a name $\name{X}$ for an infinite subset of $\omega$ such that $1_\IP$ forces that $\name{X}\subset\name{X}_t$ and
\[\big\{l\in a_{t\uhr k}\colon\ \big|\name{\mu}_m\big|\big(A^{t\uhr k}_l\big)\ge\varepsilon/2^{k+2}\big\}=\big\{l\in a_{t\uhr k}\colon\ \big|\name{\mu}_{m'}\big|\big(A^{t\uhr k}_l\big)\ge\varepsilon/2^{k+2}\big\}\]
for every $m,m'\in\name{X}$ and $1\le k\le n$. Since for every $n\in\w$ we have $1_\IP\forces\big\|\name{\mu}_n\big\|<M$, there exist $q_p\le p$ and $c_k^{q_p}\subset a_{t\uhr k}$ of size $\big|c_k^{q_p}\big|\le M\cdot2^{k+2}/\varepsilon$ such that $q_p$ forces for every $m\in\name{X}$ that
\[\big\{l\in a_{t\uhr k}\colon\ \big|\name{\mu}_m\big|\big(A^{t\uhr k}_l\big)\ge\varepsilon/2^{k+2}\big\}=c_k^{q_p}.\] 

There exist $r_p\le q_p$, $a_{r_p}\in[\w]^{<\w}$ of size
\[\big|a_{r_p}\big|>(n+2)\cdot\big(\max a_t+1\big)^2\cdot M\cdot2^{n+3}/\varepsilon\]
with $\max a_t<\min a_{r_p}$, and $\la A_m^{r_p}\in\A\colon\ m\in a_{r_p}\ra$ such that $r_p$ forces that $a_{r_p}\subset\name{X}$ and $\name{B}_m=A_m^{r_p}$ for every $m\in a_{r_p}$.

Finally, there exist $s_p\le r_p$ and for every $1\le i<k\le n$ and every $m\in a_{t\uhr i}$ a set $e_{k,m}^{s_p}\subset a_{t\uhr k}$ of size $\big|e_{k,m}^{s_p}\big|\le M\cdot2^{k+2}/\varepsilon$, such that $s_p$ forces that
\[\big\{l\in a_{t\uhr k}\colon\ \big|\name{\mu}_{m}\big|\big(A^{t\uhr k}_l\big)\ge\varepsilon/2^{k+2}\big\}=e_{k,m}^{s_p}\]
for all $k$ and $m$ as above (recall here that $1_\IP\forces\big\|\name{\mu}_m\big\|<M$ for every $m\in\w$).

As in the first step, the map $p\mapsto s_p$ (with the domain $\IP$) has the range which is a dense subset of $\IP$ --- call this range $D_t^T$, i.e. for all $q\in D_t^T$ the sequence $t\bigvid q$ will be in the tree $T$. Again, $D_t^T\in V$. For every $s_p\in D_t^T$ and $m\in a_{r_p}$ rename $\name{X}_{t\bigvid s_p}=\name{X}$, $a_{t\bigvid s_p}=a_{r_p}$, $A_m^{t\bigvid s_p}=A_m^{r_p}$ and $c_{t\uhr k}^{t\bigvid s_p}=c_k^{q_p}$. Finally, for every $s_p\in D_t^T$, $1\le i<k\le n$ and $m\in a_{t\uhr i}$ rename $e_{t\uhr k,m}^{t\bigvid s_p}=e_{k,m}^{s_p}$ and put:
\[b_{t\uhr k}^{t\bigvid s_p}=c_{t\uhr k}^{t\bigvid s_p}\cup\bigcup_{1\le j<k}\bigcup_{m\in a_{t\uhr j}}e_{t\uhr k,m}^{t\bigvid s_p}.\]

It is easy to see that all the demanded conditions (including auxiliary $(*)$), maybe except for $(ii)$, are satisfied. To show $(ii)$, fix $s_p\in D_t^T$ and $1\le k\le n$. Write $t'=t\bigvid s_p$ for simplicity and note that $t\uhr k=t'\uhr k$. We have:
\[\big|b_{t'\uhr k}^{t'}\big|\le\big|c_{t\uhr k}^{t'}\big|+\sum_{j=1}^{k-1}\sum_{m\in a_{t\uhr j}}\big|e_{t\uhr k,m}^{t'}\big|\le M\cdot2^{k+2}/\varepsilon+\sum_{j=1}^{k-1}\sum_{m\in a_{t\uhr j}}M\cdot2^{k+2}/\varepsilon\le\]
\[\le\Big(1+\sum_{j=1}^{k-1}\big|a_{t\uhr j}\big|\Big)\cdot M\cdot2^{k+2}/\varepsilon\le\Big(1+\sum_{j=1}^{k-1}\max a_{t\uhr j}\Big)\cdot M\cdot2^{k+2}/\varepsilon\le\]
\[\le\Big(1+\big(\max a_{t\uhr(k-1)}\big)^2\Big)\cdot M\cdot2^{k+2}/\varepsilon\le\big(1+\max a_{t'\uhr(k-1)}\big)^2\cdot M\cdot2^{k+2}/\varepsilon.\]
Combining this with $(*)$ we obtain:
\[\frac{\big|a_{t'\uhr k}\big|}{\big|b_{t'\uhr k}^{t'}\big|}>\frac{(k+1)\cdot\big(\max a_{t'\uhr(k-1)}+1\big)^2\cdot M\cdot2^{k+2}/\varepsilon}{\big(\max a_{t'\uhr(k-1)}+1\big)^2\cdot M\cdot2^{k+2}/\varepsilon}=k+1,\]
which yields (ii).
\end{proof}

Let us briefly compare the tree, say $T_N$, from Proposition \ref{prop:tree_nikodym} and the tree, say $T_G$, from Proposition \ref{prop:tree_grothendieck}. The main difference lays in the construction of the associated collections $\la A_m^t\in\A\colon\ m\in a_t\ra$. Namely, in each step of the construction of $T_N$ we just found such a collection using Lemma \ref{lemma:aN_antichain} --- the only necessary condition was that elements of the collection must be pairwise disjoint and disjoint from the element $\bigvee\A_0$ (defined as the union of the elements from the collections constructed in the previous steps) and must satisfy a certain measure-theoretic inequality, while in the construction of $T_G$ the elements of $\la A_m^t\colon\ m\in a_t\ra$ were found as approximating the already fixed antichain $\la\name{B}_n^G\colon\ n\in\w\ra$ in $V[G]$ (given by Lemma \ref{lemma:aG_antichain}) --- we had much less  freedom of choice than in the case of $T_N$. This difference had an important effect: in the case of $T_N$ we could keep Proposition \ref{prop:tree_nikodym}(iv) true and finally obtain an infinite antichain (see Remark \ref{remark:branch_generic_infinite}), while in the case of $T_G$ a similar condition was not possible to obtain --- collections from distinct steps may have had non-disjoint elements.

\medskip

\begin{lemma}\label{lemma:branch_compatible}
Let $\vec{p}=\la p_n\colon\ n\in\w\ra\in\IP^\omega$. Let $k<n,n'\in\w$ and assume that $p_{n-1}$ and $p_{n'-1}$ are compatible. Then:
\begin{enumerate}
	\item if $\vec{p}$ is a branch of the tree $T$ from Proposition \ref{prop:tree_nikodym} or \ref{prop:tree_grothendieck}, then $e_{\vec{p}\uhr k,m}^{\vec{p}\uhr n}=e_{\vec{p}\uhr k,m}^{\vec{p}\uhr n'}$ for all $m\in a_{\vec{p}\uhr i}$ and $i<k$;
	\item if $\vec{p}$ is a branch of the tree $T$ from Proposition \ref{prop:tree_nikodym}, then $l_{\vec{p}\uhr k}^{\vec{p}\uhr n}=l_{\vec{p}\uhr k}^{\vec{p}\uhr n'}$;
	\item if $\vec{p}$ is a branch of the tree $T$ from Proposition \ref{prop:tree_grothendieck}, then $c_{\vec{p}\uhr k}^{\vec{p}\uhr n}=c_{\vec{p}\uhr k}^{\vec{p}\uhr n'}$.
\end{enumerate}
In particular, $b_{\vec{p}\uhr k}^{\vec{p}\uhr n}=b_{\vec{p}\uhr k}^{\vec{p}\uhr n'}$ in both propositions.
\end{lemma}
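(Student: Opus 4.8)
The plan is to observe that every object occurring in the conclusion --- $e_{\vec{p}\uhr k,m}^{\vec{p}\uhr n}$, $l_{\vec{p}\uhr k}^{\vec{p}\uhr n}$, $c_{\vec{p}\uhr k}^{\vec{p}\uhr n}$ --- belongs to the ground model $V$ and is determined, through the last coordinate $p_{n-1}$ of $\vec{p}\uhr n$, to be the value of a $\IP$-name that in fact depends only on the common initial segment $\vec{p}\uhr k$ (and on the fixed parameters $m$, $k$, $\varepsilon$). Since $p_{n-1}$ and $p_{n'-1}$ are compatible, I would fix $q\le p_{n-1},p_{n'-1}$ together with a $\IP$-generic filter $H$ over $V$ with $q\in H$; then both $p_{n-1}$ and $p_{n'-1}$ lie in $H$, so everything forced by either of them holds in $V[H]$, and since $V\sbst V[H]$ and equality of sets is absolute it is enough to prove (1)--(3) inside $V[H]$.

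Part (1) then requires essentially no work: fixing $i<k$ and $m\in a_{\vec{p}\uhr i}$, Proposition~\ref{prop:tree_nikodym}(v) (resp.\ Proposition~\ref{prop:tree_grothendieck}(viii)) tells us that both $p_{n-1}$ and $p_{n'-1}$ force the relevant $e$-set to be $\{l\in a_{\vec{p}\uhr k}\colon\ |\name{\mu}_m|(A_l^{\vec{p}\uhr k})\ge 1/2^k\}$ (resp.\ with $\varepsilon/2^{k+2}$), which is literally the same name in both cases; evaluating it at $H$ gives $e_{\vec{p}\uhr k,m}^{\vec{p}\uhr n}=e_{\vec{p}\uhr k,m}^{\vec{p}\uhr n'}$. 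Part (3) I would treat in the same spirit, bringing in also the nesting and stabilisation of the $\name{X}$'s: assuming without loss of generality $n\le n'$, Proposition~\ref{prop:tree_grothendieck}(iv) gives $\emptyset\ne\name{X}_{\vec{p}\uhr n'}[H]\sbst\name{X}_{\vec{p}\uhr n}[H]$, Proposition~\ref{prop:tree_grothendieck}(vii) writes $c_{\vec{p}\uhr k}^{\vec{p}\uhr n}$ (resp.\ $c_{\vec{p}\uhr k}^{\vec{p}\uhr n'}$) as the set of $l\in a_{\vec{p}\uhr k}$ for which $|\name{\mu}_m|(A_l^{\vec{p}\uhr k})\ge\varepsilon/2^{k+2}$ for some $m\in\name{X}_{\vec{p}\uhr n}[H]$ (resp.\ some $m\in\name{X}_{\vec{p}\uhr n'}[H]$), and Proposition~\ref{prop:tree_grothendieck}(v) guarantees that $\{l\in a_{\vec{p}\uhr k}\colon\ |\name{\mu}_m|(A_l^{\vec{p}\uhr k})\ge\varepsilon/2^{k+2}\}$ is the same for all $m\in\name{X}_{\vec{p}\uhr n}[H]$; picking such an $m$ from the nonempty subset $\name{X}_{\vec{p}\uhr n'}[H]$ then forces the two $c$-sets to coincide.

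The genuinely delicate point --- and the step I expect to be the main obstacle --- is part (2). Put $x=\name{x}[H]$, $l=l_{\vec{p}\uhr k}^{\vec{p}\uhr n}$, $l'=l_{\vec{p}\uhr k}^{\vec{p}\uhr n'}$, all lying in $a_{\vec{p}\uhr k}$. Proposition~\ref{prop:tree_nikodym}(vi) applied to $\vec{p}\uhr n$ and to $\vec{p}\uhr n'$ yields, for each of $l$ and $l'$, that \emph{either} it equals $\min a_{\vec{p}\uhr k}$ and $x$ lies in none of the clopen sets $[A_m^{\vec{p}\uhr k}]$ ($m\in a_{\vec{p}\uhr k}$), \emph{or} $x$ lies in the clopen set indexed by it. To convert this into $l=l'$ one needs that $x$ can lie in at most one of the $[A_m^{\vec{p}\uhr k}]$, i.e.\ that the whole family $\la A_m^{\vec{p}\uhr k}\colon\ m\in a_{\vec{p}\uhr k}\ra$ is an antichain; this is \emph{not} among conditions (i)--(vii) --- condition (iv) asserts it only for the ``pruned'' family, from which the indices $l_{\vec{p}\uhr j}^{\vec{p}\uhr n}$ have been deleted --- but it does follow from the construction in the proof of Proposition~\ref{prop:tree_nikodym}, where the $A_m^t$ are values of a map whose range is forced (by Lemma~\ref{lemma:aN_antichain}) to be an antichain, and being an antichain is absolute. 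Granting this, a short case split closes the argument: if $x\in[A_j^{\vec{p}\uhr k}]$ for some (hence unique) $j\in a_{\vec{p}\uhr k}$, the first alternative fails for both $l$ and $l'$, so $x\in[A_l^{\vec{p}\uhr k}]\cap[A_{l'}^{\vec{p}\uhr k}]$ and therefore $l=j=l'$; if $x$ belongs to no $[A_m^{\vec{p}\uhr k}]$, the second alternative fails for both, so $l=\min a_{\vec{p}\uhr k}=l'$.

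Finally, the closing ``in particular'' falls out of Proposition~\ref{prop:tree_nikodym}(iii) (resp.\ Proposition~\ref{prop:tree_grothendieck}(iii)): it presents $b_{\vec{p}\uhr k}^{\vec{p}\uhr n}$ as the union of $\{l_{\vec{p}\uhr k}^{\vec{p}\uhr n}\}$ (resp.\ $c_{\vec{p}\uhr k}^{\vec{p}\uhr n}$) with the sets $e_{\vec{p}\uhr k,m}^{\vec{p}\uhr n}$ over $m\in a_{\vec{p}\uhr i}$, $i<k$, and we have just checked that each of these constituents is unchanged when $\vec{p}\uhr n$ is replaced by $\vec{p}\uhr n'$.
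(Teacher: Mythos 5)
Your proof is correct and follows essentially the same route as the paper's own (which merely cites conditions (v)/(viii), (vi), (iv)--(v)/(vii) and (iii) of Propositions \ref{prop:tree_nikodym} and \ref{prop:tree_grothendieck}): two compatible conditions cannot force contradictory statements about ground-model sets, so passing to a common extension and a generic filter through it settles everything. Your observation in part (2) that one also needs each level $\la A_m^{\vec{p}\uhr k}\colon m\in a_{\vec{p}\uhr k}\ra$ to be an antichain --- a fact guaranteed by the construction via Lemma \ref{lemma:aN_antichain} but not recorded among conditions (i)--(vii) --- is a detail the paper leaves implicit, and you handle it correctly.
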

\begin{proof}
(1) follows from Proposition \ref{prop:tree_nikodym}(v) and Proposition \ref{prop:tree_grothendieck}(viii). (2) holds by Proposition \ref{prop:tree_nikodym}(vi). (3) follows from Proposition \ref{prop:tree_grothendieck}(iv--v) and \ref{prop:tree_grothendieck}(vii). The last sentence follows from (iii) of both propositions.
\end{proof}

\begin{remark}\label{remark:branch_generic_infinite}
We work in $V[G]$. Let $\vec{p}=\la p_n\colon\ n\in\w\ra\in\IP^\omega$ be a branch of the tree $T$ from Proposition \ref{prop:tree_nikodym} or \ref{prop:tree_grothendieck}. Assume that the set $I=\big\{n\colon\ p_{n-1}\in G\}$ is infinite. Since $G\subset\IP$ is a filter, it follows from Lemma \ref{lemma:branch_compatible} that we have $b_{\vec{p}\uhr k}^{\vec{p}\uhr n}=b_{\vec{p}\uhr k}^{\vec{p}\uhr n'}$ whenever $n,n'\in I$ and $k\in\w$ is such that $1\le k<n,n'$. Hence, for every $k\in\w\setminus\{0\}$ we can put $b_k=b_{\vec{p}\uhr k}^{\vec{p}\uhr n}$, where $n\in I$ is arbitrary and $n>k$. We put also $a_0=\emptyset$ and $a_k=a_{\vec{p}\uhr k}$ for $k>0$, and since $\la a_k\colon\ k\in\w\ra$ is a sequence of pairwise disjoint sets (by (i) in both propositions), we may put $A_m=A_m^{\vec{p}\uhr k}$ for every $m\in a_k$ and $k\in\w$.
\end{remark}

\begin{lemma}\label{lemma:generic_branch_estimation}
Let $\vec{p}=\la p_n\colon\ n\in\w\ra\in\IP^\omega\cap V[G]$. Assume that $I=\big\{n\colon\ p_{n-1}\in G\}$ is infinite and let $\la m_k\colon\ k\in\w\setminus\{0\}\ra\in V$ be a sequence such that $m_k\in a_k\setminus b_k$ for all $k\in\w\setminus\{0\}$. Fix $n\in I$. Then, in $V[G]$ the following hold:
\begin{enumerate}
	\item if $\vec{p}$ is a branch of the tree $T$ from Proposition \ref{prop:tree_nikodym}, then $\big|\name{\mu}_{m_n}^G\big|\big(A_{m_k}\big)<1/2^k$ for all $k\in\w$ such that $k>n$;
	\item if $\vec{p}$ is a branch of the tree $T$ from Proposition \ref{prop:tree_grothendieck}, then $\big|\name{\mu}_{m_n}^G\big|\big(A_{m_k}\big)<\varepsilon/2^{k+2}$ for all $k\neq 0,n$.
\end{enumerate}
\end{lemma}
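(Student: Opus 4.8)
The plan is to prove both items by the same device, after fixing notation via Remark \ref{remark:branch_generic_infinite}: one works in $V[G]$, writes $A_l=A_l^{\vec{p}\uhr j}$ whenever $l\in a_j$, and uses that $b_k=b^{\vec{p}\uhr N}_{\vec{p}\uhr k}$ for \emph{every} $N\in I$ with $N>k$ --- this last point being exactly where Lemma \ref{lemma:branch_compatible} together with "$G$ is a filter" enters. The heuristic behind the estimates is that the set of indices $l\in a_k$ for which $A_l$ carries at least $1/2^k$ (respectively $\varepsilon/2^{k+2}$) of the variation $|\name{\mu}_{m_n}^G|$ has, by the construction of $T$, been absorbed into $b_k$ at an appropriate level of the branch $\vec{p}$; since $m_k\in a_k\setminus b_k$, the index $m_k$ must lie outside that set, and this is precisely the inequality to be proved.

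For item (1) I would fix $k>n$ and pick $N\in I$ with $N>k$ (possible as $I$ is infinite); then $1\le n<k<N$, $m_n\in a_{\vec{p}\uhr n}$, and $p_{N-1}\in G$ because $N\in I$. Applying Proposition \ref{prop:tree_nikodym}(v) with $t=\vec{p}\uhr N$, $i=n$, $m=m_n$ and evaluating the forced equality in $V[G]$ yields
\[e^{\vec{p}\uhr N}_{\vec{p}\uhr k,m_n}=\big\{l\in a_k\colon\ \big|\name{\mu}_{m_n}^G\big|\big(A_l\big)\ge 1/2^k\big\};\]
by Proposition \ref{prop:tree_nikodym}(iii) this set is contained in $b^{\vec{p}\uhr N}_{\vec{p}\uhr k}=b_k$, so from $m_k\notin b_k$ one reads off $\big|\name{\mu}_{m_n}^G\big|\big(A_{m_k}\big)<1/2^k$.

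For item (2) the subcase $k>n$ is the same argument with Proposition \ref{prop:tree_grothendieck}(viii) and (iii) replacing Proposition \ref{prop:tree_nikodym}(v) and (iii), and with threshold $\varepsilon/2^{k+2}$. The subcase $1\le k<n$ is the genuinely different point, and I expect it to be the main obstacle: now $m_n$ behaves as an index \emph{above} the level $k$ at which we want control, so the $e$-sets are not available and one must instead use the $c$-sets, which are governed by the name $\name{X}_{\vec{p}\uhr n}$. The key observation is that $m_n$ lands inside this name: since $n\in I$ we have $p_{n-1}\in G$, and Proposition \ref{prop:tree_grothendieck}(vi) with $t=\vec{p}\uhr n$ gives $m_n\in a_n\subseteq\name{X}_{\vec{p}\uhr n}^G$. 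Then Proposition \ref{prop:tree_grothendieck}(vii), read in $V[G]$, identifies
\[c^{\vec{p}\uhr n}_{\vec{p}\uhr k}=\big\{l\in a_k\colon\ \exists\,m\in\name{X}_{\vec{p}\uhr n}^G\text{ with }\big|\name{\mu}_m^G\big|\big(A_l\big)\ge\varepsilon/2^{k+2}\big\},\]
and Proposition \ref{prop:tree_grothendieck}(iii) gives $c^{\vec{p}\uhr n}_{\vec{p}\uhr k}\subseteq b^{\vec{p}\uhr n}_{\vec{p}\uhr k}=b_k$ (the last equality by Remark \ref{remark:branch_generic_infinite}, valid since $n\in I$ and $n>k$). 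Were $\big|\name{\mu}_{m_n}^G\big|\big(A_{m_k}\big)\ge\varepsilon/2^{k+2}$, the witness $m=m_n\in\name{X}_{\vec{p}\uhr n}^G$ would put $m_k\in c^{\vec{p}\uhr n}_{\vec{p}\uhr k}\subseteq b_k$, contradicting $m_k\in a_k\setminus b_k$; hence $\big|\name{\mu}_{m_n}^G\big|\big(A_{m_k}\big)<\varepsilon/2^{k+2}$.

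Apart from that, everything is routine bookkeeping: matching the index conventions $a_j=a_{\vec{p}\uhr j}$ and $A_l=A_l^{\vec{p}\uhr j}$ of Remark \ref{remark:branch_generic_infinite}, and checking that each appeal to Proposition \ref{prop:tree_nikodym} or \ref{prop:tree_grothendieck} is made with indices in the admissible ranges (which holds automatically, since in every case the relevant levels satisfy either $n<k<N$ or $k<n$, as appropriate). I do not anticipate any real difficulty in these verifications.
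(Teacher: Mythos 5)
Your proof is correct and follows essentially the same route as the paper's: for $k>n$ it evaluates condition (v) (resp.\ (viii)) at a level $N\in I$ above $k$ and uses $e\subseteq b_k$ via (iii), and for $k<n$ in the Grothendieck case it uses $m_n\in a_n\subseteq\name{X}_{\vec{p}\uhr n}^G$ together with the identification of the $c$-set and $c\subseteq b_k$. (You even cite condition (vii) for the $c$-set where the paper's text has a small slip and writes (vi).)
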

\begin{proof}
(1) Fix $k\in\w$ such that $k>n$. Since $I$ is infinite, there is $n'\in I$ such that $n'-1>k$. By Proposition \ref{prop:tree_nikodym}(v) we have:
\[p_{n'-1}\forces e_{\vec{p}\uhr k,m_n}^{\vec{p}\uhr n'}=\big\{l\in a_{\vec{p}\uhr k}\colon\ \big|\name{\mu}_{m_n}\big|\big(A_l^{\vec{p}\uhr k}\big)\ge 1/2^k\big\}.\]
Since $m_k\not\in b_k=b_{\vec{p}\uhr k}^{\vec{p}\uhr n'}\supset e_{\vec{p}\uhr k,m_n}^{\vec{p}\uhr n'}$ and $p_{n'-1}\in G$, we have:
\[\big|\name{\mu}_{m_n}^G\big|\big(A_{m_k}\big)=\big|\name{\mu}_{m_n}^G\big|\big(A_{m_k}^{\vec{p}\uhr k}\big)<1/2^k.\]

(2) Fix $k\in\w\setminus\{0\}$. If $k>n$, then similarly as in (1), using Proposition \ref{prop:tree_grothendieck}(viii), we show that $\big|\name{\mu}_{m_n}^G\big|\big(A_{m_k}\big)<\varepsilon/2^{k+2}$.

Suppose now that $k<n$. By Proposition \ref{prop:tree_grothendieck}(vi) we have:
\[p_{n-1}\forces c_{\vec{p}\uhr k}^{\vec{p}\uhr n}=\big\{l\in a_{\vec{p}\uhr k}\colon\ \exists m\in\name{X}_{\vec{p}\uhr n}\text{ such that }\big|\name{\mu}_m\big|\big(A_l^{\vec{p}\uhr k}\big)\ge\varepsilon/2^{k+2}\big\}.\]
Since $p_{n-1}\in G$, it follows that $m_n\in a_n=a_{\vec{p}\uhr n}\subset\name{X}_{\vec{p}\uhr n}^G$, and hence by the fact that $m_k\not\in b_k=b_{\vec{p}\uhr k}^{\vec{p}\uhr n}\supset c_{\vec{p}\uhr k}^{\vec{p}\uhr n}$, we obtain:
\[\big|\name{\mu}_{m_n}^G\big|\big(A_{m_k}\big)=\big|\name{\mu}_{m_n}^G\big|\big(A_{m_k}^{\vec{p}\uhr k}\big)<\varepsilon/2^{k+2}.\]
\end{proof}

\section{Auxiliary set-theoretic results\label{section:aux_set_theory}}

In this section we present some combinatorial results implied by the preservation of the ground model set of reals as a non-meager subset of the reals in the extension.

\begin{definition}\label{def:preservation_nonmeager}
A poset $\IP$ \textit{preserves the ground model reals non-meager} if the set $\IR^V$ is a non-meager subset of $\IR^{V[G]}$ for any $\IP$-generic filter $G$.
\end{definition}

\noindent Typical examples of notions of forcing preserving the ground model reals non-meager include Sacks, side-by-side products of Sacks, Miller, and Silver (see Raghavan \cite[Section 5]{Rag09}). The property is preserved by countable support iterations (\cite[Theorem 61]{Rag09}).

The following lemma is the only place in this paper where we need properness. Recall that $H(\theta)$ for a regular cardinal number $\theta$ denotes the family of all subsets of hereditary cardinality $<\theta$.

\begin{lemma}\label{lemma:generic_branch}
Let $\IP$ be a proper forcing preserving the ground model reals non-meager. Let $M$ be a countable elementary submodel of $H(\theta)$ for some regular cardinal number $\theta$. Then, there exists a $\IP$-generic filter $G$ over $V$ having the following property: for every tree $T\subset\IP^{<\w}$ in $V\cap M$ such that for every $t\in T\cap M$ the set of successors $D_t=\big\{q\colon\ t\bigvid q\in T\big\}$ is dense in $\IP$ and $D_t\in V\cap M$, there exists a branch $\vec{p}=\la p_n\colon\ n\in\w\ra\in V$ of $T$ such that $I=\big\{n\colon\ p_{n-1}\in G\big\}$ is infinite.
\end{lemma}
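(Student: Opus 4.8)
The plan is to build $G$ as a decreasing sequence of conditions meeting the dense sets of $M$, but arranged so that along the way we also diagonalize against the trees of $M$. First I would enumerate, in order type $\omega$, the pairs $(D,T)$ where $D$ ranges over the countably many dense subsets of $\IP$ lying in $M$ and $T$ ranges over the countably many trees in $V\cap M$ satisfying the hypothesis; since $M$ is countable this is possible and can be done inside $V$. The naive attempt would be to interleave: at even stages strengthen the current condition into the next dense set, and at odd stages "extend along the tree". The obstacle is that a single branch of $T$ is an $\omega$-sequence of conditions, and we cannot in general force a fixed branch to be fully inside $G$ — that is exactly why the conclusion only asks for $I=\{n:p_{n-1}\in G\}$ to be \emph{infinite}. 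So the real content is to produce, for each tree $T\in M$, \emph{some} branch $\vec p\in V$ infinitely many of whose nodes land in $G$, and to do this simultaneously for all such $T$.

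The mechanism for that is the preservation of the ground model reals as a non-meager set, and this is where properness enters. Here is the idea. Fix a tree $T$ as in the hypothesis. Working in $V$, I would code branches of $T$ by reals and observe that, because each $D_t$ is dense and in $V\cap M$, the generic real added by $\IP$ "reads off" a cofinal sequence of conditions of $T$ that are in $G$: more precisely, for a sufficiently generic $G$, the set $\{t\in T: (\exists n)\, t=\langle p_0,\dots,p_{n-1}\rangle \text{ and } p_{n-1}\in G\}$ is a subtree of $T$ that is \emph{illfounded} — it has a branch — because at each node $t$ already secured in $G$, density of $D_t$ together with $D_t\in M$ forces a further successor $q\in D_t$ into $G$. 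Running this inside the elementary submodel $M$ and using properness, I would take an $(M,\IP)$-generic condition below which this subtree is forced to be illfounded; then a branch exists \emph{in $V[G]$}. To get a branch \emph{in $V$} with infinitely many nodes in $G$, I would use non-meagerness: the set of branches of $T$ is a closed (hence Polish) subset of $\IP^\omega$ coded in $V$, the requirement "infinitely many nodes in $G$" is determined by the generic real, and an appropriate Baire-category / non-meagerness argument (in the spirit of Raghavan \cite{Rag09}, Section 5) shows that the ground model branches are not meager in the relevant sense, so one of them works.

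Concretely I would structure the construction as a fusion-type argument over the countable enumeration. Build conditions $1_\IP \ge p^0 \ge p^1 \ge \dots$ and auxiliary commitments so that: (a) $p^i$ decides membership in the $i$-th dense set of $M$; (b) at the stages assigned to a tree $T$, having already committed finitely many nodes $t_0 \subsetneq t_1 \subsetneq \dots \subsetneq t_j$ of $T$ to be in $G$ (each $t_{l+1}$ obtained from $t_l$ by choosing a successor in $D_{t_l}$, which is legitimate since $D_{t_l}\in V\cap M$ and is dense), extend the current condition to force one more node $t_{j+1}$ of $T$ into $G$; (c) the final condition generated is $(M,\IP)$-generic, using properness to thread the construction through all dense sets of $M$. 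Then $G$ is the filter generated by $\{p^i\}$. For each tree $T$ in the enumeration, conditions (b) guarantee an increasing chain of nodes of $T$ in $G$ whose union is a branch $\vec p\in V$ (it lies in $V$ because each finite approximation is built from ground-model data — elements of $D_t\in V$), and $I=\{n:p_{n-1}\in G\}\supseteq\{|t_l|:l\in\omega\}$ is infinite, as required.

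The main obstacle, I expect, is verifying that the branch obtained is genuinely a \emph{full} branch in $V$ rather than only cofinally many compatible nodes; this is where one must be careful that at each step the chosen successors $q\in D_t$ are taken from $V\cap M$ (so the whole chain, being an increasing union of ground-model finite sequences, is in $V$) and that the elementarity of $M$ plus properness lets the fusion close off — the non-meagerness hypothesis is what makes the "infinitely many stages" of (b) compatible with meeting \emph{all} dense sets of $M$, by ruling out the bad scenario in which the generic avoids every ground-model branch. Everything else is a standard properness bookkeeping argument.
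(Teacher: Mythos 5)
Your proposal contains a genuine gap, and it lies exactly at the point you yourself flag at the outset. You correctly observe that one cannot in general force a fixed branch of $T$ to lie (even cofinally) in $G$, and that this is why the conclusion only asks for \emph{some} ground-model branch meeting $G$ infinitely often. But the concrete construction you then give does precisely the forbidden thing: you build a decreasing sequence $p^0\ge p^1\ge\cdots$ and commit an increasing chain of nodes $t_0\subsetneq t_1\subsetneq\cdots$ of $T$ to be in $G$, one node per tree-stage. To realize infinitely many such commitments in a single $\IP$-generic filter over $V$ you would need a condition below the entire decreasing sequence; for a general proper poset (including Sacks, Miller, Silver, which are the intended applications) decreasing $\omega$-sequences need not have lower bounds, and properness does not supply a fusion. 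The ``filter generated by $\{p^i\}$'' is not a $\IP$-generic filter over $V$, so it does not satisfy the hypothesis of the lemma; and if you instead extend some $p^i$ to a genuine generic $G$, all commitments made after stage $i$ are lost. Elementarity of $M$ cannot repair this: an $(M,\IP)$-generic condition guarantees that each $D_t\cap M$ is predense below it, i.e.\ that \emph{some} element of $D_t\cap M$ enters $G$, but it cannot decide in advance \emph{which} one, which is what your chain $t_0\subsetneq t_1\subsetneq\cdots$ requires.

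The paper's proof avoids this entirely and is where the non-meagerness hypothesis actually does its work --- something your sketch only gestures at (``an appropriate Baire-category argument shows that the ground model branches are not meager in the relevant sense''). Concretely: take any $(M,\IP)$-generic $q_0$ and any generic $G\ni q_0$, set $T_0=T\cap M$, and suppose toward a contradiction that every ground-model branch of $T_0$ meets $G$ only finitely often, i.e.\ $[T_0]\cap V\subseteq\bigcup_k X_k$ where $X_k=\{\la p_n\colon n\in\w\ra\in[T_0]\colon p_n\notin G\text{ for all }n\ge k\}$. Each $X_k$ is closed in $[T_0]$, and $[T_0]\cap V$ is non-meager in $[T_0]$ by the preservation hypothesis, so some $X_k$ contains a trace $U_t\cap V$ of a basic open set with $|t|>k$. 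Since $D_t\cap M$ is predense below $q_0\in G$, there is $p\in D_t\cap M\cap G$, and any ground-model branch through $t\bigvid p$ lies in $U_t\cap V$ but has its $|t|$-th entry in $G$ with $|t|\ge k$ --- a contradiction. Note that here no branch is chosen in advance, no chain of conditions is forced into $G$, and a single $q_0$ handles all trees in $M$ simultaneously. Your proposal would need to be rebuilt around this a posteriori category argument to be correct.
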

\begin{proof}
Let $q_0$ be an $(M,\IP)$-generic condition. Let $T\subset\IP^{<\w}$ be a tree in $V\cap M$ such that for every $t\in T\cap M$ the set $D_t=\big\{q\in\IP\colon\ t\bigvid q\in T\big\}$ is a dense subset of $\IP$ and $D_t\in V\cap M$.
Set $T_0=T\cap M$ and note that $D_t\cap M$ is predense below $q_0$ for all $t\in T_0$. Let $G$ be a $\IP$-generic filter containing $q_0$.

In what follows we work in $V[G]$. Let $[T_0]$ denote the set of all branches of $T_0$. Suppose, contrary to our claim, that
$[T_0]\cap V\subset\bigcup_{k\in\w}X_k$, where
\[X_k=\big\{\la p_n\colon\ n\in\w\ra\in [T_0]\colon\ p_n\not\in G\text{ for all }n\ge k\big\}.\]
Note that $X_k$ is closed in $[T_0]$
for all $k\in\w$ and $[T_0]\cap V$ is non-meager in $[T_0]$ (since $\IP$ preserves the ground model reals non-meager), and hence there exists $t\in T_0$
and $k\in\w$ such that $|t|>k$ and $U_t\cap V\subset X_k$, where the set
\[U_t=\big\{\vec{p}\in [T_0]\colon\ \vec{p}\uhr |t|=t\big\}\]
is the basic open subset of $[T_0]$ generated by $t$. 
Since  $D_t\cap M$ is predense below $q_0\in G$,
there exists $p\in D_t\cap M\cap G$. Note that $t\bigvid p\in T_0$. Let $\vec{p}$ in $U_t\cap V$ such that $\vec{p}\big(|t|\big)=p$.
Then $\vec{p}\in (U_t\cap V)\setminus X_k$, a contradiction.
\end{proof}

\subsection{Almost disjoint families}

The following two lemmas seem folklore.

\begin{lemma}\label{lemma:ad_aux}
Suppose that $\IP$ preserves the ground model reals non-meager, 
$G$ is a $\IP$-generic filter and $I\in [\w]^{\w}\cap V[G]$.
For any sequence $\la H_k:k\in\w\ra \in V$ of mutually disjoint infinite subsets of
$\w$ such that $I\cap H_k$ is non-empty for every $k\in\w$, there exists a function
$f\in\w^\w\cap V$ such that the set
\[I\cap\bigcup_{k\in\w}(H_k\cap f(k))\]
is infinite.
\end{lemma}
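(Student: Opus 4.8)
The plan is to reduce the statement to the non-meagerness of $\IR^V$ inside $\IR^{V[G]}$ by encoding the set $I\cap H_k$ via a single real and choosing $f$ to be a ground-model function witnessing that $I$ (suitably coded) is not covered by an appropriate meager set. First I would fix, in $V$, a bijection between $\w$ and $\bigsqcup_{k\in\w}H_k$ compatible with the partition, so that we may freely identify $\w$ with $\prod$-style "columns" $H_k$; concretely, for each $k$ enumerate $H_k=\{h_{k,0}<h_{k,1}<\dots\}$ in $V$. Since $I\cap H_k\neq\emptyset$ for every $k$, we may define in $V[G]$ a function $g\in\w^\w$ by letting $g(k)$ be the least index $j$ with $h_{k,j}\in I$; equivalently, $h_{k,g(k)}=\min(I\cap H_k)$. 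The conclusion we want — that $I\cap\bigcup_k(H_k\cap f(k))$ is infinite for some $f\in\w^\w\cap V$ — is implied by: there is $f\in\w^\w\cap V$ such that $h_{k,g(k)}<f(k)$ for infinitely many $k$, i.e. $g$ is "infinitely often dominated" by the ground-model function $k\mapsto$ (the index of $f(k)$ in $H_k$).

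The key step is therefore to show that every $g\in\w^\w\cap V[G]$ is infinitely often dominated by some $h\in\w^\w\cap V$, under the hypothesis that $\IP$ preserves the ground model reals non-meager. This is a standard equivalent of preservation of non-meagerness: for a forcing $\IP$, the ground model reals remain non-meager in $V[G]$ iff for every $g\in\w^\w\cap V[G]$ there is $h\in\w^\w\cap V$ with $g(k)\le h(k)$ for infinitely many $k$ (this is the combinatorial characterization of non-meagerness via the Bartoszy\'nski--type chopped-reals / matching criterion; see e.g. Bartoszy\'nski--Judah). More precisely, one uses that $\IR^V$ non-meager is equivalent to: for every partition of $\w$ into consecutive finite intervals $\la J_k:k\in\w\ra$ (we can take $J_k$ to be an interval containing the first $k+1$ elements of each $H_i$, or simply work with the columns $H_k$ directly) and every $x\in 2^\w\cap V[G]$, there is $y\in 2^\w\cap V$ with $x\uhr J_k=y\uhr J_k$ for infinitely many $k$. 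Applying this with the natural coding of $g$ restricted to the first $k$ columns gives a ground-model real agreeing infinitely often, from which we read off the required $h\in V$, hence $f$.

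I expect the main obstacle to be purely bookkeeping: translating between the "column" picture ($H_k$'s, the index function $g$, the value $\min(I\cap H_k)$) and whichever standard combinatorial characterization of "preserves non-meager" one invokes, and making sure the finitely many values of $g$ that are \emph{not} dominated cause no harm (they don't, since we only need infinitely many matches). No genuine set-theoretic difficulty arises beyond citing the equivalence; the content is entirely in the hypothesis on $\IP$. Once $f\in V$ with $h_{k,g(k)}<f(k)$ for infinitely many $k$ is in hand, for each such $k$ we get $\min(I\cap H_k)=h_{k,g(k)}\in H_k\cap f(k)$, so this element lies in $I\cap\bigcup_k(H_k\cap f(k))$, and distinct $k$'s contribute distinct such elements (the $H_k$ are disjoint), giving the desired infinite set.
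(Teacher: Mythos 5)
Your proposal is correct and follows essentially the same route as the paper: set $g(k)=\min(I\cap H_k)$ (your indexing via $h_{k,j}$ is only cosmetic bookkeeping), observe that preservation of the non-meagerness of $\IR^V$ forbids adding a dominating real, and pick $f\in\w^\w\cap V$ with $g(k)<f(k)$ infinitely often. The only caveat is that your parenthetical ``iff'' overstates matters --- infinitely-often domination by ground-model reals characterizes not adding a dominating real, not preservation of non-meagerness --- but you only use the correct implication, so the argument stands.
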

\begin{proof}
We work in $V[G]$. Set $g(n)=\min(I\cap H_n)$. Since  $\IP$  preserves the ground model reals non-meager,
it obviously cannot add dominating functions, and hence there exists
$f\in\w^\w\cap V$ such that $g(n)<f(n)$ for infinitely many $n$.
It is easy to see that the set
\[I\cap\bigcup_{k\in\w}(H_k\cap f(k))\]
is infinite, which finishes the proof.
\end{proof}

\begin{lemma}\label{lemma:ad}
Suppose that $\IP$ preserves the ground model reals non-meager, 
$G$ is a $\IP$-generic filter and $I\in [\w]^{\w}\cap V[G]$. Then, there exists an uncountable almost
disjoint family  $\mathcal H\subset [\w]^\w\cap V$, $\mathcal{H}\in V$, such that $H\cap I$ is infinite for all $H\in\mathcal H$.
\end{lemma}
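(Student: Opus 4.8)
The plan is to reduce Lemma~\ref{lemma:ad} to Lemma~\ref{lemma:ad_aux} by fixing, once and for all in $V$, a countable partition of $\w$ into infinite pieces together with a rich supply of ground-model almost disjoint families adapted to all possible ``selectors'' along such partitions. First I would fix in $V$ a bijection $\w\to\w^{<\w}$, or more conveniently a partition $\la H_s:s\in\w^{<\w}\ra\in V$ of $\w$ into infinite sets indexed by finite sequences; alternatively, work with a single partition $\la H_k:k\in\w\ra\in V$ of $\w$ into infinite pieces and, using that $\IP$ adds no dominating real (Lemma~\ref{lemma:ad_aux}), first thin $I$ so that $I\cap H_k\ne\emptyset$ for all $k$ (if only finitely many $H_k$ meet $I$, then some single $H_k$ meets $I$ in an infinite set and we may work inside $H_k$, which is a ground-model set, reducing to the trivial case where the whole of $[\w]^\w\cap V$ restricted to $H_k$ works).

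Next, apply Lemma~\ref{lemma:ad_aux} to obtain $f\in\w^\w\cap V$ with $J:=I\cap\bigcup_{k}(H_k\cap f(k))$ infinite; note $J\in V[G]$ but $J\subseteq E_f:=\bigcup_k(H_k\cap f(k))$, and $E_f\in V$. Now in $V$ fix, for \emph{every} $g\in\w^\w\cap V$, an uncountable almost disjoint family $\HH_g\subset[E_g]^\w$ with $E_g:=\bigcup_k(H_k\cap g(k))$; since $E_g$ is a countable ground-model set this is a standard ZFC construction (e.g. an almost disjoint family of size $\cc$ on any countably infinite set). The point is that $E_g$ carries a canonical ``coordinate structure'': a subset $X\subseteq E_g$ is determined by the finite sets $X\cap H_k$, so $X$ corresponds to an element of $\prod_k[H_k\cap g(k)]^{<\w}$, a countable set, on which we again fix an almost disjoint family. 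What we actually need is slightly more: a family $\HH\in V$, $\HH\subset[\w]^\w\cap V$, uncountable and almost disjoint, such that for the specific infinite $J\in V[G]$ obtained above, \emph{all} members of $\HH$ meet $J$ infinitely.

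The key trick for the last point is diagonalization across all selectors simultaneously. In $V$, enumerate $\w^\w\cap V$ is impossible, but we can instead argue: since $J\subseteq E_f$ and $J$ is infinite, $J$ meets $H_{k_i}\cap f(k_i)$ for infinitely many $i$; pick in $V[G]$ an infinite $J'\subseteq J$ choosing at most one point from each $H_k$, so $J'$ is (the graph of) a partial function $\w\rightharpoonup\w$ below $f$ on an infinite domain. Ground-model almost disjoint families of partial functions dominated by $f$ of full size exist in $V$; more precisely, fix in $V$ an almost disjoint family $\HH_f$ of size $\omega_1$ consisting of infinite subsets of $E_f$, each of which is a selector (meeting each $H_k$ in $\le 1$ point) --- this can be arranged because the set of such selectors has size $\cc$ and an a.d. family of selectors of size $\omega_1$ is easily built by transfinite recursion in $V$. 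Then apply Lemma~\ref{lemma:measures_ad}-style pigeonholing? No --- instead observe directly: the set of selectors $X$ with $X\cap J'$ finite is, for each fixed finite ``error pattern,'' meager-ish; the cleanest route is to note that $J'$ viewed in $V[G]$ cannot be almost disjoint from \emph{all but countably many} members of an uncountable a.d. family in $V$ of selectors, because that would let us define in $V[G]$ an uncountable subfamily, contradicting nothing directly --- so here one genuinely uses non-meagerness again.

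The honest shortest path, which I expect is the intended one, is: by Lemma~\ref{lemma:ad_aux} reduce to $J\in[\w]^\w\cap V[G]$ with $J\subseteq E_f$, $E_f\in V$ countable; in $V$ fix a bijection $E_f\to\w$ and an a.d. family $\HH'\in V$, $\HH'\subset[\w]^\w\cap V$, of size $\omega_1$; pull it back to an a.d. family $\widetilde\HH$ on $E_f$ of size $\omega_1$; now the set $\{H\in\widetilde\HH:H\cap J\text{ finite}\}$ is at most countable, because distinct members of $\widetilde\HH$ with finite intersection with $J$ are ``almost contained'' in $E_f\setminus J$ up to finite sets, and an a.d. family of subsets of a fixed set all almost-contained in the complement of an infinite set is still a.d. there --- this does not immediately bound the count. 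So instead one partitions $\widetilde\HH$ by the ``trace on $J$'': the map $H\mapsto H\cap J$ sends $\widetilde\HH$ to an a.d. family on $J$; if uncountably many traces were finite, uncountably many would be equal to the same finite set $F$, but then those members of $\widetilde\HH$ would be pairwise almost disjoint with pairwise intersections almost equal to $F$, forcing $|F|$-fold bounded overlaps --- still consistent. The genuine obstacle, and the step I would spend the most care on, is precisely showing that an uncountable ground-model a.d. family can be chosen so that uncountably many of its members have infinite trace on the \emph{externally given} infinite set $J$; I would handle it by building $\HH$ in $V$ with extra genericity (an independent family, or an a.d. family that is ``everywhere infinitely branching'' so that it reaps every infinite subset of $E_f$ coming from the extension --- here one again invokes that $\IP$ preserves non-meagerness to guarantee $J$ is ``not too thin''), and then pruning. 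I will therefore present the proof as: (1) reduce via Lemma~\ref{lemma:ad_aux} to $J\subseteq E_f\in V$; (2) in $V$ construct on the countable set $E_f$ an a.d. family $\HH\in V$ of size $\omega_1$ together with a further structural property guaranteeing that for every infinite $Y\subseteq E_f$ in $V[G]$, uncountably many members of $\HH$ meet $Y$ infinitely --- this is where non-meagerness of $\IR^V$ is used; (3) set $\mathcal H=\HH$ and verify $H\cap I\supseteq H\cap J$ is infinite for uncountably many $H$, which suffices after discarding the countably many bad ones.
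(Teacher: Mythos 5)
Your proposal does not close. The crux of the lemma is exactly the point you flag yourself: after reducing via Lemma~\ref{lemma:ad_aux} to an infinite $J\subseteq E_f$ with $E_f\in V$ countable, you need an uncountable almost disjoint family in $V$ all (or at least uncountably many) of whose members meet the \emph{externally given} set $J$ infinitely. No single ground-model almost disjoint family on $E_f$, however cleverly prepared, can be guaranteed to do this: an infinite $J\in V[G]$ can perfectly well be almost disjoint from every member of any prescribed ground-model family, and none of your counting or ``trace'' arguments rules this out (as you concede: ``still consistent'', ``does not immediately bound the count''). The appeal to ``extra genericity'' plus ``one again invokes non-meagerness'' is a placeholder, not an argument. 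Moreover, even if only countably many members were bad, discarding them yields a family defined from $J$, hence a priori only in $V[G]$, whereas the lemma demands $\mathcal H\in V$ with \emph{all} members meeting $I$ infinitely.

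The paper's proof sidesteps this dead end by never committing to an almost disjoint family before the partitions are chosen. It first uses non-meagerness a second time, in $\Sym(\w)$: the sets $S_n=\{\sigma\in\Sym(\w):|\sigma[C_n]\cap I|<\w\}$ are meager, so some ground-model permutation $\sigma$ avoids them all, giving a partition $\w=\sqcup_n B_n$ in $V$ with every $B_n\cap I$ infinite. Then, from a Cantor scheme $\{D_\tau:\tau\in2^{<\w}\}\in V$ with $D_\tau=D_{\tau\vid 0}\cup D_{\tau\vid 1}$, it builds for each $x\in2^\w\cap V$ a partition $\la H^x_k:k\in\w\ra\in V$ into unions of $B_n$'s, arranged so that $x(k)\neq y(k)$ forces $H^x_{k_1}\cap H^y_{k_2}=\emptyset$ for all $k_1,k_2\ge k$. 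Applying Lemma~\ref{lemma:ad_aux} to each such partition gives $H^x=\bigcup_k\big(H^x_k\cap f^x(k)\big)\in V$ meeting $I$ infinitely, and the tree structure guarantees $|H^x\cap H^y|<\w$ for $x\neq y$ \emph{no matter which} $f^x,f^y$ were selected. That built-in uniformity is exactly what your single-partition reduction cannot supply; the idea of running Lemma~\ref{lemma:ad_aux} along continuum many interlocking partitions simultaneously is the missing ingredient.
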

\begin{proof}
Throughout the whole proof we work in $V[G]$.
First let us note that there exists a decomposition $\w=\sqcup_{n\in\w}B_n$ of $\w$
such that $\la B_n\colon\ n\in\w\ra\in V$ and $\big|B_n\cap I\big|=\w$ for all $n\in\w$. Indeed,
fix any decomposition $\w=\sqcup_{n\in\w}C_n$ of $\w$ into infinite sets such that
 $\la C_n\colon\ n\in\w\ra\in V$, and consider subsets
\[S_n=\{\sigma\in \mathit{Sym}(\w)\colon\ \big|\sigma[C_n]\cap I\big|<\w\}\]
of the symmetric group $\mathit{Sym}(\w)$ of all permutations of $\w$. It is easy to see that
each $S_n$ is a meager subset of $\mathit{Sym}(\w)$, and hence, by our assumption on $\IP$, there exists a permutation
\[\sigma\in\big(\mathit{Sym}(\w)\cap V\big)\setminus\bigcup_{n\in\w}S_n.\]
Set $B_n=\sigma[C_n]$.

Fix a family $\{D_\tau\colon\ \tau\in 2^{<\w}\}\in V$ of infinite subsets of $\w$
such that $D_\emptyset=\w$ and $D_{\tau}=D_{\tau\vid 0}\cup D_{\tau\vid 1}$ for all $\tau\in 2^{<\w}$.
For every $x\in 2^\w\cap V$ consider the sequence
$\la H^x_k\colon\ k\in\w\ra$, where
\[H^x_k=\bigcup\big\{B_n\colon\ n\in D_{x\uhr k}\setminus D_{x\uhr(k+1)}\big\}.\]
Observe that $\la H^x_k\colon\ k\in\w\ra\in V$ for all $x\in 2^\w\cap V$ and
$H^x_{k_1}\cap H^x_{k_2}=\emptyset$ for all $k_1\neq k_2$. Moreover,
if  $x,  y\in  2^\w\cap V$ and $x(k)\neq y(k)$ for some $k$, then
$H^x_{k_1}\cap H^y_{k_2}=\emptyset$ for any $k_1,k_2\ge k$.

Since for any $x\in 2^\w\cap V$ and $k\in\w$ there exists $n\in\w$
such that $B_n\subset H^x_k$, the sequence $\la H^x_k\colon\ k\in\w\ra $ satisfies the
requirements of Lemma \ref{lemma:ad_aux}, and hence there exists $f^x\in\w^\w\cap V$
such that  $\big|I\cap H^x\big|=\w$, where
\[H^x=\bigcup_{k\in\w}(H^x_k\cap f^x(k))\in V.\]
Since $\big|H^x\cap H^y\big|<\w$ for any $x\neq y$, the family
$\mathcal H=\{H^x\colon\ x\in 2^\w\cap V\}$ is as required.
\end{proof}

\section{Main result\label{section:main}}

Recall the following standard definition.

\begin{definition}\label{def:laver_property}
A poset $\IP$ has \textit{the Laver property} if for any $\IP$-generic filter $G$, functions $f\in\w^\w\cap V$ and $g\in\w^\w\cap V[G]$ such that $f$ eventually dominates $g$ ($g\le^*f$), there exists in $V$ a function $H\colon\w\to[\w]^{<\w}$ such that $|H(n)|\le n+1$ and $g(n)\in H(n)$ for all $n\in\w$.
\end{definition}

\noindent The following standard proper posets have the Laver property: Sacks and side-by-side products of Sacks (Bartoszy\'nski and Judah  \cite[Lemma 6.3.38]{BarJud95}), Laver, Mathias, Miller (\cite[Section 7.3]{BarJud95}), and Silver (more generally, Silver-like posets) (Halbeisen \cite[Chapter 22]{Hal12}). The Laver property is preserved by countable support iterations (\cite[Theorem 6.3.34]{BarJud95}). For more information about the property see e.g. Bartoszy\'nski and Judah \cite[Section 6.3.E]{BarJud95} or Halbeisen \cite[Chapter 20]{Hal12}.

\begin{remark}\label{remark:laver}
Note that if a poset $\IP$ has the Laver property, then it has the following property. Let $f\in\w^\w\cap V$ and let $\la\mathcal{F}_k\colon\ k\in\w\ra\in V$ be a sequence of finite sets such that $|\mathcal{F}_k|=f(k)$ for every $k\in\w$. Let $G$ be a $\IP$-generic filter over $V$ and let $b\in\prod_{k\in\w}\mathcal{F}_k$ be in $V[G]$. Then, there exists a function
\[B\in\prod_{k\in\w}\big[\mathcal{F}_k\big]^{\le k+1}\]
in $V$ such that $b(k)\in B(k)$ for every $k\in\w$.
\end{remark}

We are now in the position to prove the main theorem of the paper.

\begin{theorem}\label{theorem:main}
Let $\IP\in V$ be a notion of proper forcing having the Laver property and preserving the ground model reals non-meager. Let $\A\in V$ be a $\sigma$-complete Boolean algebra. Then, for every $\IP$-generic filter $G$ over $V$ the algebra $\A$ has the Vitali--Hahn--Saks property in $V[G]$.
\end{theorem}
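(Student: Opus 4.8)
The plan is to invoke Schachermayer's theorem (quoted before Proposition~\ref{prop:nikodym_convergence}): the Vitali--Hahn--Saks property is the conjunction of the Nikodym and Grothendieck properties. So I argue by contradiction, treating the two properties in parallel. Suppose first that for some $\IP$-generic $G_0$ over $V$ the algebra $\A$ fails the Nikodym property in $V[G_0]$. Fix $p_0\in G_0$ forcing the existence of an anti-Nikodym sequence of measures on $\A$ and pass to $\IP\uhr p_0$; since properness, the Laver property and preservation of the ground model reals non-meager are all inherited by $\IP\uhr p_0$, I may assume $1_\IP$ forces this. Extracting names and then passing to a subsequence and rescaling (this only shrinks the variations while keeping pointwise boundedness and the failure of uniform boundedness, so the sequence stays anti-Nikodym), and using Lemma~\ref{lemma:aN_antichain}, I may fix names $\la\dot\mu_n:n\in\w\ra$ and $\dot x$ such that $1_\IP$ forces ``$\la\dot\mu_n\ra$ is anti-Nikodym'', ``$\dot x$ is a Nikodym concentration point of $\la\dot\mu_n\ra$'', and ``$\|\dot\mu_n\|<n$ for all $n$''. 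Proposition~\ref{prop:tree_nikodym} then supplies a tree $T\sbst\IP^{<\w}$ with all its associated combinatorial data, with every successor set $D^T_t$ dense and in $V$.

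Next I take a countable $M\prec H(\theta)$ containing $\IP,\A,T$ and all the names and data above, and apply Lemma~\ref{lemma:generic_branch} (the one place properness is used) to get a $\IP$-generic $G$ over $V$ and a branch $\vec p=\la p_n:n\in\w\ra\in V$ of $T$ with $I=\{n:p_{n-1}\in G\}$ infinite. In $V[G]$, Remark~\ref{remark:branch_generic_infinite} together with Lemma~\ref{lemma:branch_compatible} give well-defined $a_k$, $b_k$, $A_m$ ($m\in a_k$) and $l_k$. The crucial step is to pull the branch-dependent sets $b_k$ back into the ground model: since $b_k\sbst a_k$ with $\la a_k\ra\in V$ and, by Proposition~\ref{prop:tree_nikodym}(ii), $|b_k|<|a_k|/(k+1)$, the Laver property in the form of Remark~\ref{remark:laver} (applied with $\mathcal F_k=[a_k]^{\le\lceil|a_k|/(k+1)\rceil-1}$) produces a function in $V$ assigning to each $k$ a family of at most $k+1$ candidate sets containing $b_k$; taking unions gives $\hat b_k\in V$ with $b_k\sbst\hat b_k\sbst a_k$ and $|\hat b_k|<|a_k|$. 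Hence $m_k:=\min(a_k\sm\hat b_k)$ is well defined, $\la m_k:k\ge1\ra\in V$, $m_k\in a_k\sm b_k$, $m_k\neq l_k$, and $\la A_{m_k}\ra$ is an antichain by Proposition~\ref{prop:tree_nikodym}(iv). Lemma~\ref{lemma:generic_branch_estimation}(1) then gives, for $n\in I$, that $|\dot\mu^G_{m_n}|(A_{m_k})<1/2^k$ for all $k>n$, while Proposition~\ref{prop:tree_nikodym}(vii) gives $|\dot\mu^G_{m_n}(A_{m_n})|>|\dot\mu^G_{m_n}(\bigvee\C)|+n$ for every $\C\sbst\{A_{m_k}:1\le k<n\}$.

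Now the almost disjoint family machinery closes the argument. Lemma~\ref{lemma:ad} (using preservation of non-meagerness again) gives an uncountable almost disjoint family $\HH\sbst[\w]^\w\cap V$, $\HH\in V$, with $H\cap I$ infinite for all $H\in\HH$; then Lemma~\ref{lemma:measures_ad}, applied in $V[G]$ to $\HH$, the antichain $\la A_{m_k}\ra$ (here $\sigma$-completeness of $\A$ ensures $\bigvee_{k\in H}A_{m_k}\in\A$ for $H\in\HH$) and the whole sequence $\la\dot\mu^G_j:j\in\w\ra$, yields $H_0\in\HH$ with $\dot\mu^G_j(\bigvee_{k\in H_0}A_{m_k})=\sum_{k\in H_0}\dot\mu^G_j(A_{m_k})$ for all $j$. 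Put $B=\bigvee_{k\in H_0}A_{m_k}\in\A$. For $n\in H_0\cap I$, splitting the absolutely convergent sum over $k<n$, $k=n$ and $k>n$ and using the two estimates above gives $|\dot\mu^G_{m_n}(B)|>n-1$. Since $H_0\cap I$ is infinite, this contradicts the pointwise boundedness of $\la\dot\mu^G_j\ra$ evaluated at $B\in\A$; hence $\A$ has the Nikodym property in every extension. For the Grothendieck property the scheme is identical with Proposition~\ref{prop:tree_grothendieck} in place of Proposition~\ref{prop:tree_nikodym}: from a hypothetical failure one extracts, via Lemma~\ref{lemma:aG_antichain} and a descent below a condition, names $\la\dot\mu_n\ra$, $\la\dot B_n\ra$ and rationals $M,\e>0$ as in that proposition (after reindexing the relevant subsequence and antichain by $\w$); one runs the same construction, obtaining $\la m_k\ra\in V$ with $m_k\in a_k\sm b_k$ via Proposition~\ref{prop:tree_grothendieck}(ii) and the Laver property, the uniform bound $|\dot\mu^G_{m_n}|(A_{m_k})<\e/2^{k+2}$ for $k\neq 0,n$ from Lemma~\ref{lemma:generic_branch_estimation}(2), and $|\dot\mu^G_{m_n}(A_{m_n})|=|\dot\mu^G_{m_n}(\dot B^G_{m_n})|>2\e$ from Proposition~\ref{prop:tree_grothendieck}(vi). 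With $B=\bigvee_{k\in H_0}A_{m_k}$ as above, $|\dot\mu^G_{m_n}(B)|>\e$ for infinitely many $n\in H_0\cap I$, while $m_n\to\infty$ because the blocks $a_k$ are increasing (Proposition~\ref{prop:tree_grothendieck}(i)); this contradicts $\dot\mu^G_j(B)\to0$, which holds by weak* convergence since $[B]$ is clopen. Combining the two parts with Schachermayer's theorem completes the proof.

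The conceptual heart, and the step I expect to be the main obstacle to present cleanly, is the use of the Laver property to replace the branch-dependent ``garbage'' sets $b_k\in V[G]$ by ground-model sets $\hat b_k$ small enough that $a_k\sm\hat b_k\neq\emptyset$ — this is exactly why the trees in Propositions~\ref{prop:tree_nikodym} and~\ref{prop:tree_grothendieck} were built with the factor-$(k+1)$ slack in condition~(ii) — together with arranging that the single $H_0$ from Lemma~\ref{lemma:measures_ad} simultaneously delivers countable additivity of all the $\dot\mu^G_j$ along $\la A_{m_k}\ra$ and still meets $I$ infinitely often, which is what forces the particular combination of Lemmas~\ref{lemma:ad} and~\ref{lemma:measures_ad}.
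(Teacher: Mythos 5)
Your overall architecture --- reducing to the Nikodym and Grothendieck properties via Schachermayer's theorem, building the trees of Propositions \ref{prop:tree_nikodym} and \ref{prop:tree_grothendieck}, extracting a ground-model branch meeting the generic filter infinitely often via Lemma \ref{lemma:generic_branch}, using the Laver property and the factor-$(k+1)$ slack in condition (ii) to cover the branch-dependent sets $b_k$ by ground-model data, and closing with Lemmas \ref{lemma:ad} and \ref{lemma:measures_ad} --- is exactly the paper's proof, and your Case (N) is correct as written (your $\hat b_k$ is just $\bigcup B(k)$ in the paper's notation, and you correctly note $m_k\neq l_k$ so that Proposition \ref{prop:tree_nikodym}(iv) applies).

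There is, however, a genuine gap in Case (G): you apply Lemma \ref{lemma:measures_ad} and the final telescoping estimate to $B=\bigvee_{k\in H_0}A_{m_k}$ ``as above'', but in the Grothendieck tree the sequence $\la A_{m_k}\colon k\ge1\ra$ need \emph{not} be an antichain, and Lemma \ref{lemma:measures_ad} requires one. In Proposition \ref{prop:tree_nikodym} condition (iv) guarantees disjointness across levels because each new block is chosen disjoint from $\bigvee\A_0$; Proposition \ref{prop:tree_grothendieck} has no such clause, since there $A^t_m$ is forced to equal the prescribed $\name{B}_m$ only by $p_{n-1}$, and along a branch the conditions $p_0,p_1,\dots$ need not be pairwise compatible (each $D^T_t$ is dense in all of $\IP$, not below $p_{n-1}$). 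Hence for $k\notin I$ nothing forces $A_{m_k}$ to be disjoint from the other $A_{m_{k'}}$; and since $H_0\in V$ while $I\notin V$ in general, you cannot restrict the join to $H_0\cap I$ without losing $\bigvee_{k\in H_0\cap I}A_{m_k}\in\A$. Without disjointness both the additivity $\mu_{m_n}(B)=\sum_{k\in H_0}\mu_{m_n}(A_{m_k})$ and the lower bound $|\mu_{m_n}(B)|>\varepsilon$ are unjustified. The paper repairs this by disjointifying in $V$: set $C_k=A_{m_k}\sm\bigvee_{0<i<k}A_{m_i}$, apply Lemma \ref{lemma:measures_ad} to the antichain $\la C_k\ra\in V$, and recover the estimate for $n\in I\cap H_0$ from $|\mu_{m_n}(C_n)|\ge|\mu_{m_n}(A_{m_n})|-\sum_{0<i<n}|\mu_{m_n}|(A_{m_i})$ together with Lemma \ref{lemma:generic_branch_estimation}(2), which covers both $k<n$ and $k>n$. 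This is precisely the asymmetry between the two trees that the paper flags in the discussion following Proposition \ref{prop:tree_grothendieck}; your write-up silently imports the antichain property from Case (N), where it is available, into Case (G), where it is not.
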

\begin{proof}
Let $G$ be a $\IP$-generic filter over $V$. To show that $\A$ has the Vitali--Hahn--Saks property in $V[G]$, we prove that it has in $V[G]$ both the Nikodym property and the Grothendieck property. The proof will follow by the \textit{a contrario} argument.

\medskip

\noindent\textbf{Case (N). } If $\A$ does not have the Nikodym property in $V[G]$, then there exists an anti-Nikodym sequence of measures $\la\mu_n\colon\ n\in\w\ra\in V[G]$. Without loss of generality we may assume that $\big\|\mu_n\big\|<n$ for every $n\in\w$ (if $\big\|\mu_n\big\|\ge n$ for some $n\in\w$, then replace it with $0.5n\cdot\mu_n/\big\|\mu_n\big\|$). Let $x\in K_\A\cap V[G]$ be a Nikodym concentration point of $\la\mu_n\colon\ n\in\w\ra$.

In $V$, we may assume that $1_\IP$ forces that $\la\name{\mu}_n\colon\ n\in\w\ra$ is anti-Nikodym,  $\name{x}$ is its Nikodym concentration point, and $\big\|\name{\mu}_n\big\|<n$ for every $n\in\w$.

\medskip

\noindent\textbf{Case (G). } If $\A$ does not have the Grothendieck property in $V[G]$, then there exist an anti-Grothendieck sequence of measures $\la\mu_n'\colon\ n\in\w\ra\in V[G]$, norm bounded by some rational number $M$, and, by Lemma \ref{lemma:aG_antichain}, an antichain $\la B_n\in\A\colon\ n\in\w\ra\in V[G]$ and rational $\varepsilon>0$ such that $\big|\mu_n'\big(B_n\big)\big|>2\varepsilon$ for every $n\in\w$.

In $V$, we may assume that $1_\IP$ forces that $\la\name{\mu}_n'\colon\ n\in\w\ra$ is anti-Grothendieck, $\la\name{B}_n\in\A\colon\ n\in\w\ra$ is an antichain, $\big\|\name{\mu}_n'\big\|<M$ and $\big|\name{\mu}_n'\big(\name{B}_n)\big|>2\varepsilon$ for every $n\in\w$. 

\medskip

\noindent\textbf{Cases (N) and (G). } For a moment, the proof goes simultaneously for both Case (N) and Case (G).

\medskip

Let $T\subset\IP^{<\w}$ be a tree in $V$ from Proposition \ref{prop:tree_nikodym} (in Case (N)) or Proposition \ref{prop:tree_grothendieck} (in Case (G)) with all the associated objects like $\big\{a_t\colon\ t\in T\setminus\{\emptyset\}\big\}$, $\big\{D_t^T\colon\ t\in T\setminus\{\emptyset\}\big\}$ etc. Let $M$ be a countable elementary submodel of $H(\theta)$ for a sufficiently big regular cardinal number $\theta$ containing $\IP$ and all the objects mentioned above. Let $G'$ be a $\IP$-generic filter from Lemma \ref{lemma:generic_branch} and $\vec{p}=\la p_n\colon\ n\in\w\ra$ an infinite branch of $T\cap V$ such that the set $I=\big\{n\colon\ p_{n-1}\in G'\big\}$ is infinite. Let $\HH\subset\big[\w\setminus\{0\}\big]^\w\cap V$, $\HH\in V$, be an almost disjoint family from Lemma \ref{lemma:ad}.

We now and to the end of the proof work in $V[G']$. Let $\la a_k\colon\ k\in\w\ra$, $\la b_k\colon\ k\in\w\setminus\{0\}\ra$ and $\la A_m\colon\ m\in a_k,k\in\w\ra$ be sequences from Remark \ref{remark:branch_generic_infinite}. Define a function $b\colon\w\setminus\{0\}\to[\w]^{<\w}$ by putting $b(k)=b_k$. Then, $b\in V[G']$. For every $k\in\w\setminus\{0\}$ put:
\[\mathcal{F}_k=\big\{c\subset a_k\colon\ \big|a_k\big|>(k+1)|c|\big\}.\]
Obviously, each $\mathcal{F}_k\in\big[[\w]^{<\w}\big]^{<\w}$ and $\la\mathcal{F}_k\colon\ k\in\w\setminus\{0\}\ra\in V$ (since $\la a_k\colon\ k\in\w\ra\in V$). Note that by (ii) in Propositions \ref{prop:tree_nikodym} and \ref{prop:tree_grothendieck} we have $b(k)\in\mathcal{F}_k$ for all $k\in\w\setminus\{0\}$. Since $\IP$ has the Laver property, by Remark \ref{remark:laver}, there exists a function 
\[B\colon\w\setminus\{0\}\to\big[[\w]^{<\w}\big]^{<\w}\]
in $V$ such that for every $k\in\w\setminus\{0\}$ the following hold:
\begin{itemize}
	\item $B(k)\subset\mathcal{F}_k$,
	\item $|B(k)|\le k+1$,
	\item $b(k)\in B(k)$.
\end{itemize}
It follows that $a_k\setminus\bigcup B(k)\neq\emptyset$ for all $k\in\w\setminus\{0\}$. Since $\la a_k\setminus\bigcup B(k)\colon\ k\in\w\ra\in V$, there exists $\la m_k\colon\ k\in\w\setminus\{0\}\ra\in V$ such that $m_k\in a_k\setminus\bigcup B(k)$ for every $k\in\w\setminus\{0\}$.


\medskip

We now again deal separately with Cases (N) and (G).

\medskip

\noindent\textbf{Case (N). } Note that $\la A_{m_k}\colon\ k\in\w\setminus\{0\}\ra\in V$ and by Proposition \ref{prop:tree_nikodym}(iv) it is an antichain. Since $\A$ is $\sigma$-complete in $V$, $\bigvee_{k\in H}A_{m_k}\in\A$ for every $H\in\HH$. By Lemma \ref{lemma:measures_ad} there exists $H_0\in\HH$ such that
\[\mu_m\Big(\bigvee_{k\in H_0}A_{m_k}\Big)=\sum_{k\in H_0}\mu_m\big(A_{m_k}\big)\]
for every $m\in\w$. Let $n\in I\cap H_0$. Note that $p_{n-1}\in G'$, so we have:
\[\big|\mu_{m_n}\Big(\bigvee_{k\in H_0}A_{m_k}\Big)\big|=\big|\sum_{k\in H_0}\mu_{m_n}\big(A_{m_k}\big)\big|\ge\]
\[\big|\mu_{m_n}\big(A_{m_n}\big)\big|-\big|\sum_{\substack{k\in H_0\\k<n}}\mu_{m_n}\big(A_{m_k}\big)\big|-\sum_{\substack{k\in H_0\\k>n}}\big|\mu_{m_n}\big|\big(A_{m_k}\big)\ge\]
\[n-\sum_{\substack{k\in H_0\\k>n}}1/2^k>n-1,\]
where the last line follows from Proposition \ref{prop:tree_nikodym}(vii) and Lemma \ref{lemma:generic_branch_estimation}(1). Thus, we get that
\[\sup_{n\in\w}\big|\mu_n\Big(\bigvee_{k\in H_0}A_{m_k}\Big)\big|=\infty,\]
which contradicts the pointwise boundedness of $\la\mu_n\colon\ n\in\w\ra$ and hence proves that $\A$ has the Nikodym property in $V[G]$.
\medskip

\noindent\textbf{Case (G). } For every $k\in\w\setminus\{0\}$ put:
\[C_k=A_{m_k}\setminus\bigvee_{0<i<k}A_{m_i}.\]
Then, $\la C_k\colon\ k\in\w\setminus\{0\}\ra$ is an antichain and, since $\la A_{m_k}\colon\ k\in\w\setminus\{0\}\ra\in V$, $\la C_k\colon\ k\in\w\setminus\{0\}\ra\in V$ as well. Note that if $n\in I$, then $p_{n-1}\in G'$ and hence $A_{m_n}=B_{m_n}$ by Proposition \ref{prop:tree_grothendieck}(vi). Again, since $\A$ is $\sigma$-complete in $V$, $\bigvee_{k\in H}C_{m_k}\in\A$ for every $H\in\HH$, so by Lemma \ref{lemma:measures_ad} there exists $H_0\in\HH$ such that
\[\mu_m\Big(\bigvee_{k\in H_0}C_{m_k}\Big)=\sum_{k\in H_0}\mu_m\big(C_{m_k}\big)\]
for every $m\in\w$. Let $n\in I\cap H_0$. Since for every $k\in\w\setminus\{0,n\}$ we have $C_k\subset A_{m_k}$, by Lemma \ref{lemma:generic_branch_estimation}(2) we also have:
\[\big|\mu_{m_n}\big|\big(C_k\big)<\varepsilon/2^{k+2}.\]
Finally, we obtain:
\[\big|\mu_{m_n}\Big(\bigvee_{k\in H_0}C_k\Big)\big|=\big|\sum_{k\in H_0}\mu_{m_n}\big(C_k\big)\big|\ge\big|\mu_{m_n}\big(C_n\big)\big|-\big|\sum_{\substack{k\in H_0\\k\neq n}}\mu_{m_n}\big(C_k\big)\big|\ge\]
\[\big|\mu_{m_n}\Big(A_{m_n}\setminus\bigvee_{0<i<n}A_{m_i}\Big)\big|-\big|\sum_{\substack{k\in H_0\\k\neq n}}\mu_{m_n}\big(C_k\big)\big|\ge\]
\[\big|\mu_{m_n}\big(A_{m_n}\big)\big|-\sum_{0<i<n}\big|\mu_{m_n}\big|\big(A_{m_i}\big)-\sum_{\substack{k\in H_0\\k\neq n}}\big|\mu_{m_n}\big|\big(C_k\big)>\]
\[2\varepsilon-\sum_{0<i<n}\varepsilon/2^{i+2}-\sum_{\substack{k\in H_0\\k\neq n}}\varepsilon/2^{k+2}>2\varepsilon-\varepsilon/2-\varepsilon/2=\varepsilon,\]
where the last line again follows from Lemma \ref{lemma:generic_branch_estimation}(2). Thus, we get that
\[\limsup_{n\to\infty}\mu_n\Big(\bigvee_{k\in H_0}C_{m_k}\Big)\ge\varepsilon>0,\]
which contradicts the fact that $\la\mu_n\colon\ n\in\w\ra$ is weak* convergent to $0$ and hence proves that $\A$ has the Gronthendieck property in $V[G]$.
\end{proof}

As mentioned in the introduction, Theorem \ref{theorem:main} gives a generalization of the results of Brech \cite{Bre06} and the authors \cite{SobZdo17} stating together that side-by-side products of the Sacks forcing preserve the Vitali--Hahn--Saks property of ground model $\sigma$-complete Boolean algebras.

\begin{corollary}\label{cor:main}
Let $\IP\in V$ be one of the following posets: Sacks forcing, side-by-side product of the Sacks forcing, Silver forcing, Miller forcing, or the countable support iteration of length $\omega_2$ of any of them. Let $\A\in V$ be a $\sigma$-complete Boolean algebra. Then, for any $\IP$-generic filter $G$ over $V$, the algebra $\A$ has the Vitali--Hahn--Saks property in $V[G]$.
\end{corollary}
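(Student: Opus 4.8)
The plan is to derive the statement directly from Theorem \ref{theorem:main}: it suffices to check, for each poset $\IP$ in the list, that $\IP$ is proper, has the Laver property, and preserves the ground model reals non-meager, and then apply Theorem \ref{theorem:main} to the given $\sigma$-complete Boolean algebra $\A$. Thus the corollary is purely an assembly of standard preservation and absoluteness facts, most of which have already been recalled in the two paragraphs preceding Theorem \ref{theorem:main}.

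First I would treat the one-step posets. Sacks forcing, the side-by-side (countable support) product of Sacks forcing, Silver forcing, and Miller forcing are all proper. They have the Laver property: Sacks and its side-by-side products by Bartoszy\'nski and Judah \cite[Lemma 6.3.38]{BarJud95}, Miller by \cite[Section 7.3]{BarJud95}, and Silver (and, more generally, Silver-like posets) by Halbeisen \cite[Chapter 22]{Hal12}. Finally, they preserve the ground model reals as a non-meager subset of the reals of the extension by Raghavan \cite[Section 5]{Rag09}. Hence Theorem \ref{theorem:main} applies verbatim to each of them and gives the desired conclusion for $\A$.

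For the countable support iteration $\IP$ of length $\omega_2$ of any one of these forcings, I would observe that all three hypotheses of Theorem \ref{theorem:main} are inherited by $\IP$: properness is preserved by countable support iterations of proper forcings (Shelah), the Laver property is preserved by countable support iterations by \cite[Theorem 6.3.34]{BarJud95}, and preservation of the ground model reals non-meager is preserved by countable support iterations by Raghavan \cite[Theorem 61]{Rag09}. Since each iterand is one of the posets handled in the previous step and therefore has the three properties in the relevant intermediate model, the iteration $\IP$ has all three properties as well, so Theorem \ref{theorem:main} once more yields that $\A$ has the Vitali--Hahn--Saks property in $V[G]$.

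Since the argument is just a citation of results already available in the paper's bibliography, there is no genuine obstacle; the only mild point of care is that the quoted preservation theorems for countable support iterations must be applied to iterations of these concrete, sufficiently definable posets, which causes no difficulty.
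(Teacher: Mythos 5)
Your proposal is correct and matches the paper's intent exactly: the corollary is stated without proof precisely because it follows from Theorem \ref{theorem:main} together with the facts recalled just before Definition \ref{def:laver_property} and after Definition \ref{def:preservation_nonmeager} (properness, the Laver property, and preservation of the ground model reals non-meager for each listed poset, plus preservation of the latter two properties under countable support iteration). Your citations are the same ones the paper itself invokes, so there is nothing to add.
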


No infinite Boolean algebra of cardinality strictly less than the bounding number $\mathfrak{b}$ has the Nikodym property (Sobota \cite[Proposition 3.2]{Sob17}), so if the Continuum Hypothesis holds in $V$ but $\omega_1<\mathfrak{b}=\cc$ in the generic extension $V[G]$ of a proper forcing, then no ground model $\sigma$-complete Boolean algebra of cardinality $\w_1$ has the Nikodym property in $V[G]$. This implies that in case, e.g., of the Laver model we may only ask about the Grothendieck property.

\begin{question}\label{question:laver}
Let $\A\in V$ be a $\sigma$-complete Boolean algebra. Does $\A$ have the Grothendieck property in the model obtained by the countable support iteration of length $\w_2$ of the Laver forcing?
\end{question}

Note that if the answer to Question \ref{question:laver} is positive, then we obtain a consistent example of a whole class of Boolean algebras with the Grothendieck property but without the Nikodym property. This would shed new light on such Boolean algebras, since so far only one example has been found (under the Continuum Hypothesis) --- see Talagrand \cite{Tal84}.

\medskip

It seems that changing \textit{mutatis mutandis} its proof, Theorem \ref{theorem:main} also holds for Boolean algebras with the Subsequential Completeness Property introduced by Haydon \cite{Hay81}: a Boolean algebra $\A$ has \textit{the Subsequential Completeness Property (SCP)} if for every antichain $\la A_n\colon\ n\in\w\ra$ in $\A$ there exists $M\in[\w]^\w$ such that $\bigvee_{n\in M}A_n\in\A$. Haydon \cite{Hay81} proved that algebras with SCP have the Vitali--Hahn--Saks property. Later on, many other completeness and interpolation properties of Boolean algebras were proved also to imply the Vitali--Hahn--Saks property, see e.g. Seever \cite{See68}, Molt\'o \cite{Mol81}, Schachermayer \cite{Sch82}, Freniche \cite{Fre84}, Aizpuru \cite{Aiz88}.

\begin{question}
For which other completeness or interpolation properties of Boolean algebras does the statement of Theorem \ref{theorem:main} hold?
\end{question}

\section{Consequences\label{section:consequences}}

In this section we provide several consequences of Theorem \ref{theorem:main} concerning cardinal characteristics of the continuum and the Efimov problem.

\subsection{Cardinal characteristics of the continuum\label{section:cardinal}}

Let us introduce the following three cardinal characteristics of the continuum.

\begin{definition}\label{def:numbers}
\textit{The Nikodym number} $\mathfrak{nik}$, \textit{the Grothendieck number} $\mathfrak{gr}$ and \textit{the Vitali--Hahn--Saks number} $\mathfrak{vhs}$ are defined respectively as:
\[\mathfrak{nik}=\min\big\{|\A|\colon\ \A\textit{ is an infinite B. algebra with the Nikodym property}\big\},\]
\[\mathfrak{gr}=\min\big\{|\A|\colon\ \A\textit{ is an infinite B. algebra with the Grothendieck property}\big\},\]
\[\mathfrak{vhs}=\min\big\{|\A|\colon\ \A\textit{ is an infinite B. a. with the Vitali--Hahn--Saks property}\big\}.\]
\end{definition}

%
%

Since every countable Boolean algebra has neither the Nikodym property nor the Grothendieck property and $\wp(\w)$ has both of the properties, we immediately get that $\w_1\le\mathfrak{nik},\mathfrak{gr}\le\mathfrak{vhs}\le\cc$. The relations between $\mathfrak{nik}$ and other classical cardinal characteristics of the continuum were studied in Sobota \cite{Sob17}, where the following inequalities were proved:
\begin{enumerate}
	\item $\max\big(\mathfrak{b},\mathfrak{s},\cov(\mathcal{M})\big)\le\mathfrak{nik}$ \cite[Corollary 3.3]{Sob17};
	\item $\w_1\le\mathfrak{nik}\le\kappa$ for every cardinal number $\kappa$ such that $\cof(\mathcal{N})\le\kappa=\cof\big([\kappa]^\omega,\subseteq\big)$ \cite[Theorem 7.3]{Sob17};
	\item $\w<\cf(\mathfrak{nik})$ and $\mathfrak{nik}$ may be consistently singular \cite[Corollary 3.7]{Sob17}.
\end{enumerate}
Results similar to (1) and (3) were obtained in Sobota \cite[Chapter 7]{Sob16} for $\mathfrak{gr}$:
\begin{enumerate}\setcounter{enumi}{3}
	\item $\max\big(\mathfrak{s},\cov(\mathcal{M})\big)\le\mathfrak{gr}$ \cite[Corollary 7.2.4]{Sob16};
	\item $\w<\cf(\mathfrak{gr})$ and $\mathfrak{gr}$ may be consistently singular \cite[Corollary 7.2.8]{Sob16}.
\end{enumerate}
Any ZFC upper bound better than $\cc$ for $\mathfrak{gr}$ has been so far unknown. However, it follows from Brech's result that in the Sacks model we have $\w_1=\mathfrak{gr}<\cc$ and hence, by Sobota and Zdomskyy \cite{SobZdo17}, $\w_1=\mathfrak{vhs}<\cc$. (Besides, note that the Grothendieck property is strongly related to the pseudo-intersection number $\mathfrak{p}$ --- see e.g. Haydon, Levy and Odell \cite[Corollary 3F]{HLO87}, Talagrand \cite{Tal80}, and Krupski and Plebanek \cite[page 2189]{KP11}.)

Theorem \ref{theorem:main} gives new situations where the numbers from Definition \ref{def:numbers} are small.

\begin{corollary}\label{cor:numbers}
Let $\IP\in V$ be a notion of forcing as in Theorem \ref{theorem:main} and let $G$ be a $\IP$-generic filter over $V$. Assume that the Continuum Hypothesis holds in $V$ but not in $V[G]$. Then, in $V[G]$, it holds $\w_1=\mathfrak{nik}=\mathfrak{gr}=\mathfrak{vhs}<\cc$.
\end{corollary}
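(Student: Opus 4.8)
The plan is to exhibit, in $V[G]$, a single Boolean algebra of cardinality $\w_1$ that simultaneously witnesses all three cardinal invariants, namely the ground-model power set algebra $\big(\wp(\w)\big)^V$. Since the Continuum Hypothesis holds in $V$, we have $\big|\big(\wp(\w)\big)^V\big|=(2^{\aleph_0})^V=\aleph_1$, and $\big(\wp(\w)\big)^V$ is an infinite $\sigma$-complete Boolean algebra in $V$. Applying Theorem \ref{theorem:main} with $\A=\big(\wp(\w)\big)^V$, the algebra $\big(\wp(\w)\big)^V$ has the Vitali--Hahn--Saks property in $V[G]$.

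Next I would transfer this into bounds on the invariants. By Schachermayer's theorem recalled in Section \ref{section:convergence}, the Vitali--Hahn--Saks property is the conjunction of the Nikodym and the Grothendieck properties, so $\big(\wp(\w)\big)^V$ also has both of those properties in $V[G]$. The algebra $\big(\wp(\w)\big)^V$ is still infinite in $V[G]$ (being infinite is absolute), and it still has cardinality $\aleph_1$ there: since $\IP$ is proper it preserves $\w_1$, so any set of cardinality $\aleph_1^V$ has cardinality $\aleph_1^{V[G]}$ in $V[G]$. Hence, working in $V[G]$, the algebra $\big(\wp(\w)\big)^V$ is an infinite Boolean algebra of cardinality $\w_1$ with the Nikodym, Grothendieck and Vitali--Hahn--Saks properties, which yields $\mathfrak{nik}\le\w_1$, $\mathfrak{gr}\le\w_1$ and $\mathfrak{vhs}\le\w_1$ in $V[G]$.

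Finally I would combine these upper bounds with the general inequalities $\w_1\le\mathfrak{nik},\mathfrak{gr}\le\mathfrak{vhs}\le\cc$ recorded earlier (countable Boolean algebras have neither the Nikodym nor the Grothendieck property, and an algebra with the Vitali--Hahn--Saks property has both), obtaining $\mathfrak{nik}=\mathfrak{gr}=\mathfrak{vhs}=\w_1$ in $V[G]$. Since the Continuum Hypothesis fails in $V[G]$ by assumption, $\w_1<\cc$ holds there, and the corollary follows. There is essentially no difficult step: the only point requiring a word of justification is the preservation of $\w_1$ (and hence that $\big(\wp(\w)\big)^V$ keeps cardinality $\w_1$), which is immediate from properness of $\IP$.
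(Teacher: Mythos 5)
Your proposal is correct and is essentially the argument the paper intends (the corollary is stated without an explicit proof, but the evident route is exactly yours): take a ground-model $\sigma$-complete algebra of cardinality $\w_1$ --- $\big(\wp(\w)\big)^V$ under CH works --- apply Theorem \ref{theorem:main}, use that properness preserves $\w_1$, and combine with the inequalities $\w_1\le\mathfrak{nik},\mathfrak{gr}\le\mathfrak{vhs}\le\cc$. No gaps.
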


Inequalities (1) and (4) may suggest that the dominating number $\mathfrak{d}$ is a good candidate for bounding $\mathfrak{nik}$ and $\mathfrak{gr}$ from below (e.g. Sobota \cite[Question 3.5]{Sob17}). However, using the countable support iteration of length $\w_2$ of Miller's forcing, we obtain the model where $\w_1<\mathfrak{d}=\w_2=\cc$ (see Blass \cite[Section 11.9]{Bla10}) and so, by Corollary \ref{cor:numbers}, the following holds.

\begin{corollary}\label{cor:d_big}
It is consistent that $\w_1=\mathfrak{nik}=\mathfrak{gr}=\mathfrak{vhs}<\mathfrak{d}=\w_2=\cc$.
\end{corollary}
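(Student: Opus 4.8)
The plan is to combine Corollary \ref{cor:numbers} with a known model-theoretic fact about Miller forcing. First I would recall that if $\IP$ is the countable support iteration of length $\w_2$ of Miller's (superperfect tree) forcing over a model $V$ of the Continuum Hypothesis, then $\IP$ is a proper forcing with the Laver property that preserves the ground model reals non-meager; all of this was already noted in Section \ref{section:main} (see the list of posets in Corollary \ref{cor:main}). Hence $\IP$ satisfies the hypotheses of Theorem \ref{theorem:main}, and consequently of Corollary \ref{cor:numbers}. Moreover, it is standard (Blass \cite[Section 11.9]{Bla10}) that in the resulting Miller model one has $\mathfrak{d}=\cc=\w_2$ while, say, $\mathfrak{b}=\w_1<\mathfrak{d}$; in particular $\w_1<\mathfrak{d}=\w_2=\cc$ holds in $V[G]$.

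The second step is simply to apply Corollary \ref{cor:numbers} in this specific extension: since CH holds in $V$ and fails in $V[G]$ (indeed $\cc=\w_2$ there), the corollary yields $\w_1=\mathfrak{nik}=\mathfrak{gr}=\mathfrak{vhs}<\cc$ in $V[G]$. Combining this equality chain with the computation $\mathfrak{d}=\w_2=\cc$ in the Miller model gives precisely $\w_1=\mathfrak{nik}=\mathfrak{gr}=\mathfrak{vhs}<\mathfrak{d}=\w_2=\cc$, which is the desired consistency statement. Since the Miller iteration is a concrete forcing construction carried out over a ground model of CH (which exists, e.g., in $L$), this establishes the consistency with ZFC.

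There is essentially no obstacle here: the statement is a direct corollary, and the only thing one must be careful about is that the Miller model really does separate $\mathfrak{b}$ (or $\w_1$) from $\mathfrak{d}$ — that is, that Miller forcing does not add a dominating real over each intermediate model, so that $\mathfrak{b}$ stays $\w_1$, while the length-$\w_2$ iteration forces $\cc=\w_2$ and Miller reals are unbounded so $\mathfrak{d}=\w_2$ as well. All of this is classical and cited, so the ``hard part'' was already done in proving Theorem \ref{theorem:main} and establishing the properties of Miller forcing; Corollary \ref{cor:d_big} itself is just the observation that these two facts coexist in one model.
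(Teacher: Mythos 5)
Your proposal is correct and follows exactly the paper's own argument: the paper likewise obtains Corollary \ref{cor:d_big} by applying Corollary \ref{cor:numbers} to the countable support iteration of length $\w_2$ of Miller forcing over a model of CH, citing Blass \cite[Section 11.9]{Bla10} for $\w_1<\mathfrak{d}=\w_2=\cc$ in the Miller model. Your additional remarks on why $\mathfrak{d}=\w_2$ there (unbounded reals added cofinally) are accurate but not needed beyond the citation.
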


\noindent However, we do not know whether $\mathfrak{d}$ may be consistently strictly smaller than any of the number $\mathfrak{nik}$, $\mathfrak{gr}$ or $\mathfrak{vhs}$.

\begin{question}
Let $\mathfrak{x}\in\big\{\mathfrak{nik},\mathfrak{gr},\mathfrak{vhs}\big\}$. Is it consistent that $\mathfrak{x}>\mathfrak{d}$?

In particular, is there an $\omega^\omega$-bounding poset $\IP$ such that $\wp(\omega)^V$ does not have the Vitali--Hahn--Saks property in some $\IP$-generic extension $V[G]$?
\end{question}

We can obtain a result similar to Corollary \ref{cor:d_big} for the ultrafilter number $\mathfrak{u}$ and the reaping number $\mathfrak{r}$: using the countable support iteration of length $\w_2$ of Silver's forcing, we obtain the model where $\w_1=\mathfrak{d}<\mathfrak{r}=\mathfrak{u}=\w_2=\cc$ (see Halbeisen \cite[page 379]{Hal12}).

\begin{corollary}\label{cor:r_u_big}
It is consistent that $\w_1=\mathfrak{nik}=\mathfrak{gr}=\mathfrak{vhs}<\mathfrak{r}=\mathfrak{u}=\w_2=\cc$.
\end{corollary}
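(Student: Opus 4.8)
The plan is to realize the whole configuration in a single model: let $V\models\mathsf{GCH}$ and let $\IP$ be the countable support iteration of length $\w_2$ of Silver forcing, with $G$ a $\IP$-generic filter over $V$. The key observation is that this $\IP$ is simultaneously an instance of the forcing of Theorem \ref{theorem:main} and the forcing producing the \emph{Silver model}, whose classical cardinal invariants are known.

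First I would check that Corollary \ref{cor:numbers} applies to $\IP$. Silver forcing is proper, has the Laver property (Halbeisen \cite[Chapter 22]{Hal12}), and preserves the ground model reals non-meager (Raghavan \cite[Section 5]{Rag09}); all three properties are preserved by countable support iterations of proper forcings (\cite[Theorem 6.3.34]{BarJud95} and \cite[Theorem 61]{Rag09}), so $\IP$ is a proper forcing with the Laver property preserving the ground model reals non-meager --- precisely as recorded in Corollary \ref{cor:main}. A routine bookkeeping/properness argument over a model of $\mathsf{CH}$ gives $\cc=\w_2$ in $V[G]$ (Silver forcing adds reals cofinally in the iteration, so $\mathsf{CH}$ fails). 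Hence Corollary \ref{cor:numbers} yields
\[\w_1=\mathfrak{nik}=\mathfrak{gr}=\mathfrak{vhs}<\cc=\w_2\qquad\text{in }V[G].\]

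Second, I would invoke the standard analysis of the Silver model, as presented in Halbeisen \cite[page 379]{Hal12}: there $\dd=\w_1$ (the iteration is $\w^\w$-bounding), while $\mathfrak r=\mathfrak u=\cc=\w_2$. Combining this with the previous display immediately gives
\[\w_1=\mathfrak{nik}=\mathfrak{gr}=\mathfrak{vhs}<\mathfrak r=\mathfrak u=\w_2=\cc,\]
which is the desired consistency statement.

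The proof introduces no new mathematics beyond Theorem \ref{theorem:main}; the only genuinely delicate point --- hence the main ``obstacle'' --- is the \emph{a priori} unexpected coexistence, in one model, of $\mathfrak{vhs}=\w_1$ with $\mathfrak r=\mathfrak u=\w_2$. In other words, one must be sure that being proper, having the Laver property, and preserving the ground model reals non-meager is compatible with pushing the reaping and ultrafilter numbers up to the continuum; this compatibility is exactly what the Silver model witnesses, so the remaining work is only to locate the two ingredients in the literature and to observe that they pertain to the same forcing.
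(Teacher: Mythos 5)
Your proposal is correct and follows essentially the same route as the paper: the authors likewise take the countable support iteration of length $\w_2$ of Silver forcing over a model of CH, apply Corollary \ref{cor:numbers} (via Corollary \ref{cor:main}) to get $\w_1=\mathfrak{nik}=\mathfrak{gr}=\mathfrak{vhs}<\cc$, and cite Halbeisen \cite[page 379]{Hal12} for $\mathfrak{r}=\mathfrak{u}=\w_2=\cc$ in the Silver model. No discrepancies to report.
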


It can be shown that $\w_1=\mathfrak{r}=\mathfrak{u}<\mathfrak{s}=\w_2=\cc$ consistently holds (see Blass \cite[Section 11.11]{Bla10}), so, by the inequalities (1) and (4) above and Corollary \ref{cor:r_u_big}, there is no ZFC inequality between any of the numbers from Definition \ref{def:numbers} and $\mathfrak{r}$ or $\mathfrak{u}$. A similar situation occurs also for the groupwise density number $\mathfrak{g}$: in the Cohen model we have $\w_1=\mathfrak{g}<\w_2=\cov(\mathcal{M})=\mathfrak{nik}=\mathfrak{gr}=\mathfrak{vhs}=\cc$, while in the Miller model it holds that $\w_1=\mathfrak{nik}=\mathfrak{gr}=\mathfrak{vhs}<\w_2=\mathfrak{g}=\cc$ (see Blass \cite[Chapter 11]{Bla10}).

\begin{corollary}
Let $\mathfrak{x}\in\big\{\mathfrak{nik},\mathfrak{gr},\mathfrak{vhs}\big\}$ and $\mathfrak{y}\in\big\{\mathfrak{r},\mathfrak{u},\mathfrak{g}\big\}$. Then, there is no ZFC inequality between $\mathfrak{x}$ and $\mathfrak{y}$.
\end{corollary}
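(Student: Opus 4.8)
The plan is to exhibit, for each of the six pairs $(\mathfrak{x},\mathfrak{y})$ with $\mathfrak{x}\in\{\mathfrak{nik},\mathfrak{gr},\mathfrak{vhs}\}$ and $\mathfrak{y}\in\{\mathfrak{r},\mathfrak{u},\mathfrak{g}\}$, a model of ZFC witnessing $\mathfrak{x}<\mathfrak{y}$ and another witnessing $\mathfrak{x}>\mathfrak{y}$. Every model required has already been named in the discussion above, so the argument is a matter of assembling the relevant consistency results together with the ZFC inequalities (1) and (4) and the trivial bounds $\mathfrak{nik}\le\mathfrak{vhs}$, $\mathfrak{gr}\le\mathfrak{vhs}$.

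First I would dispose of $\mathfrak{y}\in\{\mathfrak{r},\mathfrak{u}\}$. For $\mathfrak{x}<\mathfrak{y}$, Corollary \ref{cor:r_u_big} supplies a model (the countable support iteration of length $\w_2$ of Silver forcing over a model of CH, which is covered by Corollary \ref{cor:main}) with $\w_1=\mathfrak{nik}=\mathfrak{gr}=\mathfrak{vhs}<\mathfrak{r}=\mathfrak{u}=\w_2=\cc$, so there each of $\mathfrak{nik},\mathfrak{gr},\mathfrak{vhs}$ lies strictly below both $\mathfrak{r}$ and $\mathfrak{u}$. For $\mathfrak{x}>\mathfrak{y}$, I would take the model of Blass \cite[Section 11.11]{Bla10} in which $\w_1=\mathfrak{r}=\mathfrak{u}<\mathfrak{s}=\w_2=\cc$; inequalities (1) and (4) give $\mathfrak{s}\le\mathfrak{nik}$ and $\mathfrak{s}\le\mathfrak{gr}$, hence also $\mathfrak{s}\le\mathfrak{vhs}$, so in this model $\mathfrak{r}=\mathfrak{u}=\w_1<\w_2\le\mathfrak{nik},\mathfrak{gr},\mathfrak{vhs}$.

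For $\mathfrak{y}=\mathfrak{g}$ I would argue analogously. For $\mathfrak{x}<\mathfrak{g}$, the Miller model (the countable support iteration of length $\w_2$ of Miller forcing over a model of CH) works: by Corollary \ref{cor:numbers} one has $\w_1=\mathfrak{nik}=\mathfrak{gr}=\mathfrak{vhs}<\cc$ there, while $\mathfrak{g}=\w_2=\cc$ by Blass \cite[Chapter 11]{Bla10}. For $\mathfrak{x}>\mathfrak{g}$, the Cohen model works: there $\mathfrak{g}=\w_1$ but $\cov(\mathcal{M})=\cc$, and inequalities (1), (4) together with $\mathfrak{nik},\mathfrak{gr}\le\mathfrak{vhs}$ give $\cov(\mathcal{M})\le\mathfrak{nik},\mathfrak{gr},\mathfrak{vhs}$, so all three equal $\cc=\w_2>\w_1=\mathfrak{g}$. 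Since for each pair we thus have a model of $\mathfrak{x}<\mathfrak{y}$ and a model of $\mathfrak{x}>\mathfrak{y}$, neither $\mathfrak{x}\le\mathfrak{y}$ nor $\mathfrak{x}\ge\mathfrak{y}$ is provable in ZFC.

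There is no genuine obstacle; the statement merely summarizes consistency results already at hand. The only subtlety worth flagging is that in the two ``reverse'' models one does not construct a small Boolean algebra with the relevant property directly but instead bounds $\mathfrak{nik}$ and $\mathfrak{gr}$ (and hence $\mathfrak{vhs}$) from below by $\mathfrak{s}$, respectively $\cov(\mathcal{M})$, via inequalities (1) and (4) --- which is exactly why those inequalities are indispensable to the argument.
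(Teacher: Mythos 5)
Your proposal is correct and follows essentially the same route as the paper: Corollary \ref{cor:r_u_big} (Silver iteration) and the Blass model with $\w_1=\mathfrak{r}=\mathfrak{u}<\mathfrak{s}=\cc$ combined with inequalities (1) and (4) handle $\mathfrak{r}$ and $\mathfrak{u}$, while the Miller and Cohen models handle $\mathfrak{g}$ exactly as in the text. The one point you flag --- that the ``reverse'' directions rest on the lower bounds $\mathfrak{s},\cov(\mathcal{M})\le\mathfrak{nik},\mathfrak{gr}\le\mathfrak{vhs}$ rather than on any direct construction --- is precisely how the paper argues as well.
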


Note that by (2) it follows that there is also no ZFC inequality between $\mathfrak{nik}$ and the almost disjointness number $\mathfrak{a}$. Indeed, in the Cohen model we have $\w_1=\mathfrak{a}<\cov(\mathcal{M})=\mathfrak{nik}=\omega_2=\cc$, while Brendle \cite[Proposition 4.7]{Bre03} showed that it consistently holds $\w_2=\cof(\mathcal{N})<\mathfrak{a}=\w_3=\cc$, hence consistently $\mathfrak{nik}<\mathfrak{a}$.

\begin{question}
Is it consistent that $\mathfrak{gr}<\mathfrak{a}$?
\end{question}

To obtain counterparts of Corollaries \ref{cor:d_big} and \ref{cor:r_u_big} for other cardinal characteristics (e.g. those from the right-hand side half of Cicho\'n's diagram), it would be sufficient to answer the following question.

\begin{question}
Which standard cardinal characteristics of the continuum may be pushed up to $\cc$ using a proper forcing $\IP$ having the Laver property and preserving the ground model reals non-meager?
\end{question}

We do not know whether $\mathfrak{b}\le\mathfrak{gr}$ in ZFC. If the answer for Question \ref{question:laver} is positive, then it would hold $\w_1=\mathfrak{gr}<\mathfrak{b}=\mathfrak{nik}=\mathfrak{vhs}=\w_2=\cc$ in the Laver model obtained from $V$ satisfying the Continuum Hypothesis.

\begin{question}\label{question:gr_b}
Is it consistent that $\mathfrak{gr}<\mathfrak{b}$?
\end{question}

The positive answer to Question \ref{question:gr_b} (or \ref{question:laver}) would imply that it is consistent that $\mathfrak{gr}<\mathfrak{nik}$. So far, we do not know any examples of models where the two numbers are different.

\begin{question}
Is it consistent that $\mathfrak{gr}\neq\mathfrak{nik}$?
\end{question}

\subsection{Efimov spaces\label{section:efimov}}

As we have already mentioned in Introduction, the Efimov problem is a long-standing open question asking whether there exists \textit{an Efimov space}, i.e. an infinite compact Hausdorff space with neither non-trivial converging sequences nor a copy of $\beta\w$, the \v{C}ech-Stone compactification of $\w$. Many consistent examples of Efimov spaces have been obtained, but so far no ZFC example has been found. The first consistent examples were found by Fedorchuk \cite{Fed75,Fed76} under the assumptions of the Continuum Hypothesis or $\Diamond$. Fedorchuk \cite{Fed77} also obtained an Efimov space assuming that $\mathfrak{s}=\w_1$ and $2^{\mathfrak{s}}<2^{\cc}$. Dow \cite{Dow05} strengthened Fedorchuk's result and constructed an Efimov space assuming ``only'' that $\cof\big([\mathfrak{s}]^\w,\subseteq\big)=\mathfrak{s}$ and $2^{\mathfrak{s}}<2^{\cc}$. Dow and Fremlin \cite{DF07} proved that in the random model we may have $2^{\w_1}=2^{\mathfrak{s}}=2^{\cc}$ but there still do exist Efimov spaces --- namely, they proved that if $K$ is a ground model (totally disconnected) compact F-space, then in any random generic extension $K$ has no non-trivial converging sequences. Recently, Dow and Shelah \cite{DS13} constructed an Efimov space under the assumption that $\mathfrak{b}=\cc$.

Boolean algebras with the Nikodym property or the Grothendieck property yield examples of Efimov spaces --- it is well-known that if a Boolean algebra $\A$ has either the Nikodym property or the Grothendieck property, then its Stone space $K_\A$ does not have any non-trivial convergent sequences, and hence if $\w<|\A|<\cc$, then $K_\A$ is an Efimov space (since $w\big(K_\A\big)=|\A|$). In Sobota \cite[Section 8.2]{Sob17}, assuming that $\cof(\mathcal{N})\le\kappa=\cof\big([\kappa]^\w,\subseteq\big)<\cc$, a Boolean algebra with the Nikodym property and of cardinality $\kappa$ was constructed, so an Efimov space of weight $\kappa$ was obtained as well. We can apply this argument here --- together with Theorem \ref{theorem:main} --- to prove the following corollary.

\begin{theorem}\label{theorem:efimov}
Let $\IP\in V$ be a proper forcing having the Laver property and preserving the ground model reals non-meager and $G$ a $\IP$-generic filter over $V$. Assume that the Continuum Hypothesis does not hold in $V[G]$. Let $\A\in V$ be $\sigma$-complete Boolean algebra of cardinality $\w_1$. Then, in $V[G]$, the Stone space $K_\A$ of the algebra $\A$ is an Efimov space of weight $\omega_1$.
\end{theorem}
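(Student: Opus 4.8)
The plan is to verify, working in $V[G]$, the three conditions defining an Efimov space for the Stone space $K_\A$: that it is an infinite compact Hausdorff space, that it has no non-trivial (i.e.\ non-eventually-constant) convergent sequence, and that it contains no topological copy of $\beta\w$. All the substantive work is already contained in Theorem~\ref{theorem:main}; the rest is standard. I note in passing that although $\A$ need not remain $\sigma$-complete in $V[G]$, this causes no trouble here, since Theorem~\ref{theorem:main} only uses $\sigma$-completeness of $\A$ in $V$.

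First, as $|\A|=\omega_1$ the algebra $\A$ is infinite, so $K_\A$ is an infinite (totally disconnected) compact Hausdorff space. Next, Theorem~\ref{theorem:main} gives that $\A$ has the Vitali--Hahn--Saks property in $V[G]$, hence --- by our definition of that property, equivalently by Schachermayer \cite[Theorem 2.5]{Sch82} --- the Grothendieck property there. I would then use the well-known fact that the Stone space of a Boolean algebra with the Grothendieck (or the Nikodym) property has no non-trivial convergent sequences; concretely, if $\la x_n\colon n\in\w\ra$ converged to some $x\in K_\A$ without being eventually constant, then after passing to a subsequence the $x_n$ may be assumed pairwise distinct and $\neq x$, the Dirac measures $\la\delta_{x_n}\colon n\in\w\ra$ on $\A$ would be weak* convergent to $\delta_x$, yet on the Borel set $\big\{x_{2k}\colon k\in\w\big\}$ they would take the values $1,0,1,0,\dots$, so by Remark~\ref{remark:weak_conver_borel} they would not be weakly convergent --- contradicting the Grothendieck property.

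Finally, I would compute the weight of $K_\A$ in $V[G]$. Since $\IP$ is proper it preserves $\omega_1$, and since $\A$ --- as a set --- belongs to $V$ and has cardinality $\omega_1$ there, forcing cannot enlarge it; thus $|\A|=\omega_1$ in $V[G]$. For an infinite Boolean algebra the weight of its Stone space equals the cardinality of the algebra, so $w(K_\A)=\omega_1$ in $V[G]$. On the other hand $w(\beta\w)=\cc$, and weight does not increase under passage to subspaces; as the Continuum Hypothesis fails in $V[G]$, i.e.\ $\omega_1<\cc$, the space $K_\A$ can contain no copy of $\beta\w$. Combining the three parts, $K_\A$ is an Efimov space of weight $\omega_1$. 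I do not expect any genuine obstacle here: the only care needed is in the cardinal arithmetic of $V[G]$ (covered by properness of $\IP$) and in quoting correctly the standard implication from the Grothendieck/Nikodym property to the absence of non-trivial convergent sequences in the Stone space.
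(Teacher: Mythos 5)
Your proposal is correct and follows essentially the same route as the paper, which derives the theorem from Theorem~\ref{theorem:main} via the standard facts that the Grothendieck (or Nikodym) property excludes non-trivial convergent sequences in the Stone space and that $w(K_\A)=|\A|=\omega_1<\cc=w(\beta\w)$ rules out copies of $\beta\w$. Your explicit Dirac-measure argument and the remark on preservation of $\omega_1$ by properness merely flesh out details the paper leaves implicit.
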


Note that Theorem \ref{theorem:efimov} introduces a new situation for which none of the previous arguments works (e.g. those of Fedorchuk or Dow et al.), but still there does exist an Efimov space. Indeed, assume that the Generalized Continuum Hypothesis holds in the universe $V$. Let $V'$ be an extension of $V$ in which $2^{\w}=\w_1$ and $2^{\w_1}=2^{\w_2}=\w_3$ hold (e.g. force with $Fn\big(\w_3\times\w_1,2,\w_1\big)$). Then, by using the countable support iteration of length $\w_2$ of Miller's forcing obtain the extension $V''$ of $V'$ in which $\mathfrak{s}=\mathfrak{b}=\w_1<\w_2=\cof(\mathcal{N})=\cc$ and $2^{\w_1}=2^{\mathfrak{s}}=2^{\w_2}=\w_3$.

\begin{corollary}\label{corollary:new_efimov}
It is consistent that $\mathfrak{s}=\mathfrak{b}=\w_1<\w_2=\cof(\mathcal{N})=\cc$, $2^{\w_1}=2^{\mathfrak{s}}=2^{\w_2}=\w_3$, and an Efimov space exists.
\end{corollary}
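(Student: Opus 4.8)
The plan is to produce the desired model in three forcing stages and then invoke Theorem~\ref{theorem:efimov}, starting from a ground model $V$ of the Generalised Continuum Hypothesis. First I would force with $\IP_0=Fn\big(\w_3\times\w_1,2,\w_1\big)$, the poset of partial functions from $\w_3\times\w_1$ to $2$ of cardinality ${<}\,\w_1$, and let $V'$ be the extension. This poset is $\w_1$-closed, so it adds no new reals and preserves $\w_1$; by a $\Delta$-system argument (using GCH) it is $\w_2$-cc, hence it preserves all cardinals and cofinalities. Thus CH still holds in $V'$, while $2^{\w_1}=2^{\w_2}=\w_3$ holds there, the upper bounds following by a nice-names count using the $\w_2$-cc, $|\IP_0|=\w_3$, and $\w_3^{\w_1}=\w_3$ (valid in $V$ by GCH, as $\cf(\w_3)>\w_1$).

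Secondly, working over $V'$, I would let $\IP$ be the countable support iteration of length $\w_2$ of Miller forcing and $V''=V'[G]$. Miller forcing is proper, has the Laver property (\cite[Section~7.3]{BarJud95}), and preserves the ground model reals non-meager (\cite[Section~5]{Rag09}); all three properties are inherited by the countable support iteration (\cite[Theorem~6.3.34]{BarJud95} and \cite[Theorem~61]{Rag09}), so $\IP$ satisfies the hypotheses of Theorem~\ref{theorem:efimov}. Since CH holds in $V'$ and is preserved at every intermediate stage, $\IP$ has size $\le\w_2$ and is $\w_2$-cc, hence preserves cardinals and forces $\cc=\w_2$; in particular CH fails in $V''$. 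The standard Miller model computations (Blass~\cite[Section~11.9]{Bla10}) give $\mathfrak b=\mathfrak s=\w_1$ and $\mathfrak d=\w_2=\cc$ in $V''$, whence $\cof(\mathcal N)=\w_2$ because $\mathfrak d\le\cof(\mathcal N)\le\cc$. A nice-names count analogous to the first stage --- now using $2^{\w_1}=\w_3$ in $V'$, $|\IP|\le\w_2$, and the $\w_2$-cc --- shows that $2^{\w_1}=2^{\w_2}=\w_3$ is preserved in $V''$, and since $\mathfrak s=\w_1$ this gives $2^{\mathfrak s}=2^{\w_1}=2^{\w_2}=\w_3$.

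Finally, I would apply Theorem~\ref{theorem:efimov} with this $\IP$, with the role of $V$ played by $V'$, and with $\A:=\big(\wp(\w)\big)^{V'}$, which in $V'$ is a $\sigma$-complete Boolean algebra of cardinality $2^{\w}=\w_1$ (and retains cardinality $\w_1$ in $V''$, as forcing does not alter cardinalities of ground model sets). The theorem then yields that in $V''$ the Stone space $K_\A$ is an Efimov space of weight $\w_1$. Combining this with the cardinal values $\mathfrak s=\mathfrak b=\w_1<\w_2=\cof(\mathcal N)=\cc$ and $2^{\w_1}=2^{\mathfrak s}=2^{\w_2}=\w_3$ secured in the second stage gives exactly the statement of the corollary.

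I expect the only genuinely delicate point to be the cardinal-arithmetic bookkeeping in the second stage --- verifying that neither the $\w_2$ Miller reals added along the iteration nor the corresponding nice names for subsets of $\w_1$ and $\w_2$ push $2^{\w_1}$ or $2^{\w_2}$ above $\w_3$. Everything else --- the preservation properties of Miller forcing and of its countable support iteration, the values $\mathfrak s=\mathfrak b=\w_1$ and $\mathfrak d=\cc=\w_2$ in the Miller model, and the application of Theorem~\ref{theorem:efimov} --- is a routine assembly of facts already cited in the paper.
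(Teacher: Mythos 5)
Your construction is exactly the one the paper uses: force over a GCH model with $Fn(\w_3\times\w_1,2,\w_1)$ to arrange $2^\w=\w_1$ and $2^{\w_1}=2^{\w_2}=\w_3$, then take the countable support iteration of length $\w_2$ of Miller forcing and apply Theorem \ref{theorem:efimov} to a ground model $\sigma$-complete algebra of size $\w_1$ such as $\wp(\w)^{V'}$. The proposal is correct and only fleshes out the cardinal-arithmetic bookkeeping that the paper leaves implicit.
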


There is an interesting question connecting Theorem \ref{theorem:main} and the result of Dow and Fremlin similar to Question \ref{question:laver}. Let $K$ be a totally disconnected compact space. Seever \cite[Theorem A]{See68} proved that $K$ is an F-space if and only if the Boolean algebra $\A=Clopen(K)$ of clopen subsets of $K$ has \textit{the property (I)}, i.e. for every sequences $\la A_n\colon\ n\in\w\ra$ and $\la B_n\colon\ n\in\w\ra$ in $\A$ such that $A_n\le B_m$ for every $n,m\in\w$, there exists $C\in\A$ such that $A_n\le C\le B_m$ for every $n,m\in\w$. Trivially, $\sigma$-complete Boolean algebras have the property (I). Seever \cite[Theorems B and C]{See68} proved also that if a Boolean algebra $\A$ has the property (I), then it has the Vitali--Hahn--Saks property. Now, since the Stone space of a Boolean algebra with the Nikodym property or the Grothendieck property has no non-trivial convergent sequences, we may ask about the extension of Dow and Fremlin's result as follows.

\begin{question}
Let $\IP\in V$ be the random forcing. Let $\A\in V$ be a Boolean algebra which is $\sigma$-complete (or weaker: has the property (I)). If $G$ is a $\IP$-generic filter over $V$, then does $\A$ have the Vitali--Hahn--Saks property in $V[G]$?
\end{question}

We finish the paper with the following issue concerning spaces without non-trivial converging sequences and cardinal characteristics of the continuum.

\begin{definition}
Let $\mathcal{NC}$ denotes the class of all infinite compact Hausdorff spaces without non-trivial convergent sequences. \textit{The non-convergence number} $\mathfrak{z}$ is defined as:
\[\mathfrak{z}=\min\big\{w(K)\colon\ K\in\mathcal{NC}\big\}.\]
\end{definition}

It is well-known that $\max(\mathfrak{s},\cov(\mathcal{M}))\le\mathfrak{z}$ (cf. Sobota \cite[Proposition 3.1]{Sob17}) and, by the fact that the Stone spaces of Boolean algebras with any of the Nikodym and Grothendieck properties do not contain any non-trivial convergent sequences, we have $\mathfrak{z}\le\min(\mathfrak{nik},\mathfrak{gr})$. However, we do not know whether the inequality may be strict.

\begin{question}
Is any of the inequalities $\mathfrak{z}<\mathfrak{gr}$ and $\mathfrak{z}<\mathfrak{nik}$ consistent?
\end{question}

Obviously, a positive answer to Question \ref{question:gr_b} would imply the consistency of the inequalities $\mathfrak{z}\le\mathfrak{gr}<\mathfrak{b}\le\mathfrak{nik}$. On the other hand, if the answer to Question \ref{question:gr_b} is negative, we ask about the relation between $\mathfrak{z}$ and $\mathfrak{b}$.

\begin{question}
Does $\mathfrak{b}\le\mathfrak{z}$ hold in ZFC? 
\end{question}

Note that assuming $\mathfrak{b}=\mathfrak{c}$ Dow and Shelah \cite{DS13} obtained a counterexample to the Efimov problem.

\end{document}